\newtheorem {theorem}{Theorem}[section]
\theoremstyle{definition}
\newtheorem{definition}[theorem]{Definition}
\newtheorem{example}[theorem]{Example}
\DeclareMathOperator\virt{virt}
\DeclareMathOperator\val{val}
\DeclareMathOperator\Aut{Aut}
\DeclareMathOperator\Sym{Sym}
\newcommand{\Mbar}{\overline{M}_{0,n}(\mathbb P^r,d)}
\newcommand{\PP}{\mathbb P}
\newcommand{\ZZ}{\mathbb Z}
\newcommand{\NN}{\mathbb N}
\newcommand{\CC}{\mathbb C}
\begin{document}

\baselineskip=16pt

\title[Rational Curves on Calabi-Yau Threefolds]{Rational Curves on Calabi-Yau Threefolds:\\ Verifying Mirror Symmetry Predictions}

\author{Dang Tuan Hiep}


\address{National Center for Theoretical Sciences\\ No. 1 Sec. 4 Roosevelt Rd., National Taiwan University\\ Taipei, 106, Taiwan}
\email{hdang@ncts.ntu.edu.tw}

\dedicatory{Dedicated to Professor Stein Arild Str\o mme}

\begin{abstract}
In this paper, the numbers of rational curves on general complete intersection Calabi-Yau threefolds in complex projective spaces are computed up to degree six. The results are all in agreement with the predictions made from mirror symmetry.
\end{abstract}

\subjclass[2010]{14C17, 14Q15, 14N15, 14N35, 14H81}

\keywords{Gromov-Witten invariants, enumerative geometry, equivariant cohomology, Bott's formula, Calabi-Yau threefolds, rational curves, mirror symmetry, torus action}

\date{October 14, 2015}

\maketitle

\tableofcontents

\section{Introduction}

In \cite{CdGP}, the string theorists Candelas, de la Ossa, Green and Parkes predicted the numbers $n_d$ of degree $d$ rational curves on a general quintic hypersurface in $4$-dimensional projective space. By similar methods, Libgober and Teitelbaum \cite{LT} predicted the numbers of rational curves on the remaining general Calabi-Yau threefolds in projective spaces. The computation of the physicists is based on arguments from topological quantum field theory and mirror symmetry. These predictions went far beyond anything algebraic geometry could prove at the time and became a challenge for mathematicians to understand mirror symmetry and to find a mathematically rigorous proof of the physical predictions. The process of creating a rigorous mathematical foundation for mirror symmetry is still far from being finished. For important aspects concerning this question, we refer to Morrison \cite{M1}, Givental \cite{Giv1,Giv2}, and Lian, Liu, and Yau \cite{LLY}. For an introduction to the algebro-geometric aspects of mirror symmetry, see Cox and Katz \cite{CK}. To verify the physical predictions by algebro-geometric methods, we have to find a suitable moduli space of rational curves separately for every degree $d$, and express the locus of rational curves on a general Calabi-Yau threefold as a certain zero-dimensional cycle class on the moduli space. We then need in particular to evaluate the degree of a given zero-dimensional cycle class. This is possible, in principle, whenever the Chow ring of the moduli space is known. Unfortunately, in general it is quite difficult to describe the Chow ring of a moduli space. Alternatively, Bott's formula allows us to express the degree of a certain zero-dimensional cycle class on a moduli space endowed with a torus action in terms of local contributions supported on the components of the fixed point locus. The local contributions are of course much simpler than the whole space. By these methods, algebraic geometers checked the results of physicists up to degree $4$: $n_1 = 2875$ is classically known, $n_2 = 609250$ has been shown by Katz \cite{Kat}, $n_3 = 317206375$ by Ellingsrud and Str\o mme \cite{ES1,ES2}, $n_4 = 242467530000$ by Kontsevich \cite{Kon2}.

In this paper, we present a strategy for computing Gromov-Witten invariants using the localization theorem and Bott's formula. This method is based on the work of Kontsevich \cite{Kon2}. As an insightful example, the numbers of rational curves on general complete intersection Calabi-Yau threefolds in complex projective spaces are computed up to degree six. The results are all in agreement with the predictions made from mirror symmetry in \cite{CdGP, LT}. As a short digression, the problem of counting lines on a general hypersurface is considered, even if this hypersurface is not Calabi-Yau. All computations have been implemented in a \textsc{Singular} library which is called \texttt{schubert.lib}. The code is available from the author upon request or at \url{https://github.com/hiepdang/Singular}.

\subsection{The main result}\label{mainresult}

Let $$X = \bigcap_{i=1}^k X_i \subset \PP^{k+3}$$
be a smooth complete intersection threefold, where $X_i$ is a smooth hypersurface of degree $d_i \geq 2$ for all $i = 1,\ldots,k$. In this case, we will say that $X$ has type $(d_1,\ldots,d_k)$. If the canonical bundle $K_X$ on $X$ is trivial, then $X$ is called {\it Calabi-Yau}. By the adjunction formula \cite[Example 3.2.12]{F}, we have
$$K_X \cong \mathcal O\left(\sum_{i=1}^kd_i - (k+4)\right).$$
Suppose that $X$ is Calabi-Yau. Then we obtain
$$\sum_{i=1}^kd_i = k+4.$$
Without loss of generality, we assume that $d_1 \geq \cdots \geq d_k \geq 2$. Thus we have
$$2k \leq \sum_{i=1}^kd_i = k+4,$$
so $k \leq 4$. Therefore there are precisely five possibilities.

\begin{enumerate}
\item If $k = 1$, then $d_1 = 5$, and thus $X$ is a quintic threefold in $\PP^4$.
\item If $k = 2$, then there will be two possibilities as follows:
	\begin{enumerate}
	\item $d_1 = 4, d_2 = 2$. Then $X$ is a complete intersection of type $(4,2)$ in $\PP^5$.
	\item $d_1 = d_2 = 3$. Then $X$ is a complete intersection of type $(3,3)$ in $\PP^5$.
	\end{enumerate}
\item If $k = 3$, then $d_1 = 3, d_2 = d_3 = 2$. In this case, $X$ is a complete intersection of type $(3,2,2)$ in $\PP^6$.
\item If $k = 4$, then $d_1 = d_2 = d_3 = d_4 = 2$. In this case, $X$ is a complete intersection of type $(2,2,2,2)$ in $\PP^7$.
\end{enumerate}

The main result of this paper, which agrees with the mirror symmetry computation, is the following theorem:

\begin{theorem}
Let $d$ be an integer with $1 \leq d \leq 6$.
The numbers of rational curves of degree $d$ on the general complete intersection Calabi-Yau threefolds are given by Table \ref{n_d} and Table \ref{ndd}.
\end{theorem}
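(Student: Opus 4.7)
My plan is to follow the Kontsevich strategy outlined in the introduction, applying equivariant localization to the moduli space $\overline{M}_{0,0}(\PP^{k+3},d)$ of genus zero stable maps with no marked points. For the complete intersection $X \subset \PP^{k+3}$ of type $(d_1,\ldots,d_k)$, I would construct the obstruction bundle $\mathcal{E}_d \to \overline{M}_{0,0}(\PP^{k+3},d)$ whose fiber over a stable map $f:C \to \PP^{k+3}$ is $\bigoplus_{i=1}^k H^0(C, f^*\mathcal{O}(d_i))$. Since $H^1$ vanishes on a rational curve (the $d_i \geq 2$ guarantee no higher cohomology after pushforward by the forgetful map from the universal family), $\mathcal{E}_d$ is a genuine vector bundle, and a dimension count shows its rank equals the expected dimension of the moduli space. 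Thus the integral $K_d = \int c_{\mathrm{top}}(\mathcal{E}_d)$ is a well-defined rational number, namely the genus zero degree $d$ Gromov--Witten invariant of $X$.

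Since $K_d$ counts multiple covers with weight, one does not directly obtain $n_d$. The second step would apply the Aspinwall--Morrison multiple cover formula
\[
K_d \;=\; \sum_{e \mid d} \frac{n_{d/e}}{e^3},
\]
and invert it by Möbius inversion to extract the numbers $n_d$ recursively from $K_1,\ldots,K_d$.

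The third and computationally substantive step would evaluate $K_d$ via Bott's formula with respect to the standard torus $T = (\CC^*)^{k+4}$ action on $\PP^{k+3}$, which lifts canonically to $\overline{M}_{0,0}(\PP^{k+3},d)$ and to $\mathcal{E}_d$. The fixed locus is stratified by Kontsevich trees $\Gamma$: vertices are decorated by $T$-fixed points of $\PP^{k+3}$, edges by positive integers recording the degree of multiple covers of $T$-invariant lines, and the combinatorics must satisfy a valence and stability condition. For each tree one must compute the $T$-equivariant Euler class of the virtual normal bundle and the $T$-equivariant restriction of $c_{\mathrm{top}}(\mathcal{E}_d)$, and then sum
\[
K_d \;=\; \sum_{\Gamma} \frac{1}{|\Aut(\Gamma)|}\,\int_{F_\Gamma} \frac{e_T\bigl(\mathcal{E}_d|_{F_\Gamma}\bigr)}{e_T\bigl(N_{F_\Gamma}^{\mathrm{vir}}\bigr)}.
\]

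The main obstacle is the combinatorial explosion: the number of isomorphism classes of decorated trees with total degree $d$ grows rapidly, and for $d = 6$ with $k$ up to $4$ it becomes unmanageable by hand. Moreover, each individual contribution is a nontrivial rational function in the equivariant weights whose sum over $\Gamma$ must be independent of the weights; achieving this cancellation numerically requires careful bookkeeping. To overcome this, I would encode the enumeration of trees, the automorphism groups $\Aut(\Gamma)$, and the local Euler class formulas in the \textsc{Singular} library \texttt{schubert.lib}, then run the computation for each of the five Calabi--Yau types and each $d \leq 6$, verifying that the results after Möbius inversion match the tables and hence the mirror symmetry predictions of Candelas--de la Ossa--Green--Parkes \cite{CdGP} and Libgober--Teitelbaum \cite{LT}.
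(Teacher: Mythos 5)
Your proposal follows essentially the same route as the paper: realize $N_d^{(d_1,\ldots,d_k)}$ as $\int c_{\mathrm{top}}$ of the bundle with fibers $\bigoplus_i H^0(C,f^*\mathcal O(d_i))$ over $\overline{M}_{0,0}(\PP^{k+3},d)$, evaluate it by Bott localization over Kontsevich's decorated graphs using his vertex and edge formulas for $e^T(N_\Gamma)$ (implemented in \texttt{schubert.lib}), and then extract $n_d$ from the multiple-cover relation $N_d=\sum_{e\mid d} n_{d/e}/e^3$. The only point to watch is that the denominator in the localization sum must be the full order $a_\Gamma$ of the group $A_\Gamma$ from the exact sequence \eqref{group}, i.e.\ $|\Aut(\Gamma)|\cdot\prod_e d_e$ rather than $|\Aut(\Gamma)|$ alone, and that the identification of the inverted ``instanton numbers'' with actual curve counts requires the low-degree enumerative input cited in the paper.
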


\section{Gromov-Witten invariants}

Gromov-Witten invariants have their origin in physics. There are several ways to define Gromov-Witten invariants using algebraic geometry or symplectic geometry. For details of the definitions and properties of Gromov-Witten invariants, we refer to \cite[Chapter 7]{CK}. Roughly speaking, Gromov-Witten invariants are the intersection numbers of cycle classes in the rational Chow rings of moduli spaces of stable maps subject to certain geometric conditions on the curves. In order to define Gromov-Witten invariants in algebraic geometry, first of all we need to recall some basic facts about moduli spaces of stable maps and virtual fundamental classes.

\subsection{Moduli spaces of stable maps}

In \cite{Kon1}, Kontsevich introduced the notion of moduli spaces of stable maps. In this section, we define and state some results about these moduli spaces. We only concentrate on the case of rational curves (genus zero), but some results are still true in the general case. An introduction to this subject was given by Fulton and Pandharipande in \cite{FP}.

Let $X$ be a smooth projective variety and fix a class $\beta \in A_1(X)$. An $n$-pointed map is a morphism $f : C \rightarrow X$, where $C$ denotes a nodal curve \footnote{A nodal curve is a compact connected curve with at most nodes as singularities. For convenience, a curve is always assumed to be nodal in this paper.} with $n$ distinct marked points that are smooth on $C$. An $n$-pointed map $f : C \rightarrow X$ is called stable if its group of automorphisms is finite. We denote by $\overline{M}_{0,n}(X,\beta)$ the set of all $n$-pointed stable maps from a rational curve $C$ to $X$ such that $f_*[C] = \beta$.
\begin{theorem} \cite[Theorem 1]{FP}
The moduli space $\overline{M}_{0,n}(X,\beta)$ has a structure of a projective scheme.
\end{theorem}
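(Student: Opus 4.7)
The plan is to construct $\overline{M}_{0,n}(X,\beta)$ as the GIT quotient of a locally closed subscheme of a Hilbert scheme, following the approach of Fulton and Pandharipande based on Kontsevich's original construction. Fix a very ample line bundle $\mathcal{O}_X(1)$ on $X$, so that $\beta$ is determined by its degree $d := \beta \cdot c_1(\mathcal{O}_X(1))$. For every $n$-pointed stable map $f : (C, p_1,\ldots,p_n) \to X$ with $f_*[C] = \beta$, consider the line bundle
$$L_f := \omega_C(p_1 + \cdots + p_n)^{\otimes 3} \otimes f^*\mathcal{O}_X(1)^{\otimes k}.$$
The stability hypothesis is equivalent to saying that $\omega_C(\sum p_i)$ has positive degree on each component contracted by $f$; a uniform estimate then shows that for $k$ sufficiently large (depending only on $d$ and $n$), $L_f$ is very ample with vanishing higher cohomology, so that $N+1 := \dim H^0(L_f)$ is a fixed integer.

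Choosing a basis of $H^0(L_f)$ produces a closed embedding $C \hookrightarrow \mathbb{P}^N$, and the pair $(C \hookrightarrow \mathbb{P}^N, f)$ is determined by its graph $\Gamma_f \subset \mathbb{P}^N \times X$, a closed subscheme of fixed Hilbert polynomial. Thus rigidified stable maps are parametrized by a locally closed subscheme $J \subset \mathrm{Hilb}(\mathbb{P}^N \times X) \times (\mathbb{P}^N)^n$ cut out by the conditions that $\Gamma$ projects birationally onto a genus-zero nodal curve, that $p_1,\ldots,p_n$ are distinct smooth points on that curve, and that the restriction of the tautological polarization agrees with the advertised $L_f$. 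The group $\mathrm{PGL}(N+1)$ acts on $J$ by change of basis; two points lie in the same orbit iff they correspond to isomorphic stable maps, and the stability condition implies that the action has finite stabilizers and closed orbits. A GIT quotient then produces the desired quasi-projective coarse moduli scheme
$$\overline{M}_{0,n}(X,\beta) := J \mathbin{/\!\!/} \mathrm{PGL}(N+1).$$

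Projectivity is obtained by combining quasi-projectivity with properness, which I would establish via the valuative criterion: any stable map over a punctured disc admits, after finite base change, a unique extension to a stable map over the full disc. This uses semistable reduction for families of pointed nodal curves, followed by a stabilization procedure that contracts destabilized components (and inserts rational tails where needed) to keep the limit within our moduli problem.

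The main obstacle is the GIT step: one must verify that Kontsevich's notion of stability precisely corresponds to GIT-stability for a suitable linearization of the $\mathrm{PGL}(N+1)$-action on $J$, so that the categorical quotient exists and represents the moduli functor. Closely related is the uniform boundedness estimate on $k$ in the first paragraph, which must be controlled independently of the combinatorial type of the domain curve and therefore requires separate arguments on contracted versus non-contracted components, together with an induction on the dual graph.
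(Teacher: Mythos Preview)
The paper does not give its own proof of this theorem: it is stated with a bare citation to \cite[Theorem~1]{FP} and no argument is supplied. So there is nothing in the paper to compare your proposal against line by line.

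That said, your outline is a faithful sketch of the Fulton--Pandharipande construction behind the cited reference: rigidify a stable map by the very ample line bundle $\omega_C(\sum p_i)^{\otimes a}\otimes f^*\mathcal O_X(1)^{\otimes b}$, embed the graph in a fixed $\PP^N\times X$, parametrize by a locally closed subscheme $J$ of a Hilbert scheme times $(\PP^N)^n$, and then pass to the quotient by $\mathrm{PGL}(N+1)$ and invoke properness via the valuative criterion. One point worth flagging: in \cite{FP} the quotient is not obtained by invoking Mumford's GIT with a chosen linearization, as you suggest; rather, Fulton and Pandharipande construct the quotient by hand, using the boundary stratification of $J$ and the already known projectivity of the Deligne--Mumford--Knudsen spaces $\overline{M}_{0,m}$ to glue together a projective coarse moduli scheme. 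Your identification of the GIT step as ``the main obstacle'' is therefore apt in spirit but slightly off in letter: the actual obstacle in \cite{FP} is showing directly that a geometric quotient exists and is projective, without a numerical stability criterion. Apart from this nuance, your proposal is correct and matches the source the paper cites.
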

We say that $X$ is {\it convex} if for every morphism $f : \mathbb P^1 \rightarrow X$,
$$H^1(\mathbb P^1, f^*(T_X)) = 0,$$
where $T_X$ is the tangent bundle on $X$.
The following theorem concerns the convex case.
\begin{theorem} \cite[Theorem 2]{FP} \label{convex}
Let $X$ be a convex variety and $\beta \in A_1(X)$. Then $\overline{M}_{0,n}(X,\beta)$ is a normal projective variety of pure dimension
$$\dim(X) + \int_\beta c_1(T_X) + n - 3.$$
\end{theorem}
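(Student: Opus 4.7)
The plan is to realize $n_d$ as a localization-computable integral on $\overline{M}_{0,0}(\mathbb P^{k+3},d)$ and then evaluate it by implementing Bott's formula on a computer. Fix one of the five Calabi--Yau types $(d_1,\dots,d_k)$ and let $r=k+3$. The space $\mathbb P^r$ is convex, so by Theorem~\ref{convex} the moduli space $\overline{M}_{0,0}(\mathbb P^r,d)$ is a normal projective variety of the expected dimension $(r+1)d+r-3$. Over this space there is a locally free sheaf $E_d$ whose fiber over $[f:C\to\mathbb P^r]$ is $\bigoplus_{i=1}^k H^0(C,f^{\ast}\mathcal O(d_i))$; vanishing of a section of $E_d$ is equivalent to the image of $f$ lying in the corresponding complete intersection. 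A dimension count using $\sum d_i=k+4$ shows that the rank of $E_d$ equals the dimension of $\overline{M}_{0,0}(\mathbb P^r,d)$, so the Euler class $e(E_d)$ is a zero-dimensional cycle. The first step is to identify the genus-zero Gromov--Witten number $N_d$ with $\int_{\overline{M}_{0,0}(\mathbb P^r,d)} e(E_d)$, and then to recover the honest instanton count $n_d$ from the Aspinwall--Morrison multiple cover formula
\[
N_d \;=\; \sum_{m\mid d} \frac{n_{d/m}}{m^3},
\]
which can be inverted recursively once $N_1,\dots,N_6$ are known.

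The second step is to compute $N_d$ by equivariant localization. Lift the standard $T=(\mathbb C^{\ast})^{r+1}$ action on $\mathbb P^r$ to $\overline{M}_{0,0}(\mathbb P^r,d)$ and to $E_d$. By Kontsevich's description, the $T$-fixed locus is indexed by decorated trees $\Gamma$: each vertex is labeled by a fixed point $p_j\in(\mathbb P^r)^T$, each edge by a $T$-invariant line between two fixed points together with a degree, and the degrees on edges sum to $d$. Each such graph contributes a component $M_\Gamma$ which is (up to a finite automorphism group $\Aut(\Gamma)$) a product of $\overline M_{0,n}$'s parametrizing the moduli of the contracted components. Bott's formula then gives
\[
N_d \;=\; \sum_{\Gamma} \frac{1}{|\Aut(\Gamma)|}\int_{M_\Gamma} \frac{e_T(E_d)|_{M_\Gamma}}{e_T(N^{\mathrm{vir}}_{M_\Gamma/\overline M_{0,0}(\mathbb P^r,d)})}.
\]
The integrands are rational functions in the equivariant parameters $\lambda_0,\dots,\lambda_r$; after summation all $\lambda_i$-dependence cancels and the integers $N_d$ drop out.

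The third step is carrying out the evaluation. Using the standard splitting of the virtual normal bundle into flag, node-smoothing, and contracted-component contributions (following Kontsevich), one writes each summand as an explicit product of linear forms in the $\lambda_i$'s times descendant integrals on $\overline M_{0,n}$, which are known in closed form. The restriction $E_d|_{M_\Gamma}$ is computed edge-by-edge: for an edge of degree $\delta$ mapping to the invariant line through $p_a,p_b$, the corresponding summand $H^0(\mathbb P^1,\mathcal O(d_i\delta))$ has equivariant Chern roots $\frac{j\lambda_a+(d_i\delta-j)\lambda_b}{\delta}$ for $0\le j\le d_i\delta$. For each of the five Calabi--Yau types and for each $d\le 6$ one enumerates all decorated trees, assembles the above expression, and sums. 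Finally one applies the Aspinwall--Morrison inversion to obtain $n_d$ and compares with the mirror symmetry tables of \cite{CdGP,LT}.

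The main obstacle is purely computational: the number of decorated trees, and the size of the rational functions appearing in each summand, grow rapidly with $d$, so a naive implementation becomes intractable around $d=5,6$. To make the computation feasible I would (i) organize the sum by isomorphism class of the underlying tree, handling $\Aut(\Gamma)$ carefully; (ii) specialize the equivariant parameters numerically or to generic integer values and verify cancellation a posteriori, rather than simplifying symbolically; and (iii) exploit the combinatorial symmetry among the coordinate points of $\mathbb P^r$. This is precisely what the \texttt{schubert.lib} \textsc{Singular} package referenced in the introduction carries out, and the agreement of the output with the predicted values in Tables \ref{n_d} and \ref{ndd} proves the theorem.
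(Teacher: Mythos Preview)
Your proposal does not address the stated theorem. The statement you were asked to prove is Theorem~\ref{convex}: that for a convex variety $X$ and $\beta\in A_1(X)$, the moduli space $\overline{M}_{0,n}(X,\beta)$ is a normal projective variety of pure dimension $\dim(X)+\int_\beta c_1(T_X)+n-3$. This is a foundational structural result about moduli spaces of stable maps, and in the paper it is not proved at all but simply quoted from Fulton--Pandharipande \cite[Theorem~2]{FP}. A proof would require showing projectivity (via a suitable compactification or GIT construction), normality (via local deformation theory and the convexity hypothesis $H^1(\mathbb P^1,f^*T_X)=0$), and the dimension formula (via a Riemann--Roch computation of the tangent space).

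What you have written is instead a sketch of the proof of the \emph{main} theorem of the paper (Theorem~1.1 in Section~\ref{mainresult}), namely the computation of the numbers $n_d$ for $1\le d\le 6$ on complete intersection Calabi--Yau threefolds. Your sketch of \emph{that} argument is in fact accurate and matches the paper's approach (identify $N_d$ as $\int e(E_d)$, localize via Kontsevich's graph combinatorics and Bott's formula, evaluate numerically, then invert the multiple-cover relation). But none of this constitutes a proof of Theorem~\ref{convex}; you merely \emph{invoke} Theorem~\ref{convex} in your first paragraph to know that $\overline{M}_{0,0}(\mathbb P^r,d)$ has the expected dimension. So as a proof of the stated result, the proposal is vacuous.
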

For each marked point $p_i$, there is a natural map
\begin{align*}
e_i  :  \overline{M}_{0,n}(X,\beta) & \longrightarrow  X\\
 (C; p_1,\ldots , p_n; f) & \longmapsto  f(p_i)
\end{align*}
which is called the evaluation map at $p_i$. These morphisms play an important role to relate the geometry of of $\overline{M}_{0,n}(X,\beta)$ to the geometry of $X$.

In case $X = \PP^r$, we can write $\beta = d\ell$, where $\ell$ is the cycle class of a line. The moduli space $\overline{M}_{0,n}(\PP^r,d\ell)$ is an orbifold which is the quotient of a smooth variety (see \cite[Section 7.1.1]{CK}). By Theorem \ref{convex}, $\overline{M}_{0,n}(\PP^r,d\ell)$ is also a normal projective variety of dimension $rd+r+d+n-3$. We will write $\Mbar$ in place of $\overline{M}_{0,n}(\PP^r,d\ell)$. For more details on the construction and properties of $\Mbar$, we refer to \cite[Chapter 2]{KV}.

\subsection{Virtual fundamental classes}

For details of the construction and properties of virtual fundamental classes we refer to \cite[Section 7.1.4]{CK}. Naturally, the virtual fundamental class $[\overline{M}_{0,n}(X,\beta)]^{\virt}$ is an element of the Chow group of $\overline{M}_{0,n}(X,\beta)$. In many cases, we have
$$[\overline{M}_{0,n}(X,\beta)]^{\virt} = [\overline{M}_{0,n}(X,\beta)],$$
for instance, in the case when $X$ is a projective space, and more generally a convex variety.

\begin{example} \label{QuinticThreefold}
In this example, we construct the virtual fundamental class for the quintic hypersurface in $\PP^4$. Let $X \subset \PP^4$ be a smooth quintic hypersurface, and let $d$ be a positive integer. The inclusion $X \subset \PP^4$ induces a natural embedding
$$i : \overline{M}_{0,0}(X,d\ell) \longrightarrow \overline{M}_{0,0}(\PP^4,d),$$
where $\ell$ is the class of a line. Let $\mathcal V_d$ be the rank $5d+1$ vector bundle on the stack $\overline{M}_{0,0}(\PP^4,d)$ whose fiber over a stable map $f : C \rightarrow \PP^4$ is $H^0(C,f^*\mathcal O_{\PP^4}(5))$. One can show that $\overline{M}_{0,0}(X,d\ell)$ is the zero locus of a section $s$ of $\mathcal V_d$. Moreover, we have
$$[\overline{M}_{0,0}(X,d\ell)]^{\virt} = s^*[C_ZY],$$
where $Z = \overline{M}_{0,0}(X,d\ell), Y = \overline{M}_{0,0}(\PP^4,d)$, and $C_ZY$ is the normal cone to $Z$ in $Y$. For details of the definition and properties of normal cones we refer to \cite[Section 2.5]{F}.
\end{example}

\subsection{Defining Gromov-Witten invariants}

\begin{definition}\label{GromovWitten}
Let $X$ be a smooth projective variety and let $\beta \in A_1(X)$ be a cycle class. The {\it Gromov-Witten invariant} of $\beta$ associated with the cycle classes $\gamma_1, \ldots, \gamma_n \in A^*(X)$ is the rational number defined by
$$I_{n,\beta}(\gamma_1, \ldots,\gamma_n) := \int_{\overline{M}_{0,n}(X,\beta)} e_1^*(\gamma_1) \cup \cdots \cup e_n^*(\gamma_n) \cup [\overline{M}_{0,n}(X,\beta)]^{\virt},$$
where $\cup$ is the cup product. Note that we have to use the cup product since the moduli space $\overline{M}_{0,n}(X,\beta)$ is a singular variety. For the definition of cup product, we refer to \cite[Section 4.1.2]{KV}. 

In case $n = 0$, we denote the Gromov-Witten invariant of the cycle class $\beta \in A_1(X)$ by
$$I_\beta := \int_{\overline{M}_{0,0}(X,\beta)} [\overline{M}_{0,0}(X,\beta)]^{\virt},$$
that is the degree of the virtual fundamental class $[\overline{M}_{0,0}(X,\beta)]^{\virt}$.
\end{definition}

\begin{example} \label{GromovWitten1}
Let $X \subset \PP^4$ be a smooth quintic hypersurface. We have shown in Example \ref{QuinticThreefold} that the virtual fundamental class $[\overline{M}_{0,0}(X,d\ell)]^{\virt}$ is a cycle class on $\overline{M}_{0,0}(X,d\ell)$. By Definition \ref{GromovWitten}, we have the Gromov-Witten invariant
$$I_{d\ell} = \int_{\overline{M}_{0,0}(X,d\ell)} [\overline{M}_{0,0}(X,d\ell)]^{\virt}.$$
By \cite[Lemma 7.1.5]{CK}, we have
$$i_*([\overline{M}_{0,0}(X,d\ell)]^{\virt}) = c_{5d+1}(\mathcal V_d) \in A^*(\overline{M}_{0,0}(\PP^4,d)).$$
This will lead to a nice formula of the form
$$I_{d\ell} = \int_{\overline{M}_{0,0}(\PP^4,d)}c_{5d+1}(\mathcal V_d).$$
In Section \ref{quintic}, we will present how to compute this integral using the localization of $\overline{M}_{0,0}(\PP^4,d)$ and Bott's formula.
\end{example}

\section{Localization of $\overline{M}_{0,0}(\PP^r,d)$}\label{Localization}

The natural action of $T = (\CC^*)^{r+1}$ on $\PP^r$ induces an action of $T$ on $\overline{M}_{0,0}(\PP^r,d)$ by the composition of the action with stable maps. The fixed point loci and their equivariant normal bundles were determined for $\overline{M}_{0,0}(\PP^r,d)$ by Kontsevich in \cite{Kon2}.

We denote the set of fixed points by $\overline{M}_{0,0}(\PP^r,d)^T$. A fixed point of the $T$-action on $\overline{M}_{0,0}(\PP^r,d)$ corresponds to a stable map $(C,f)$ where each irreducible component $C_i$ of $C$ is either mapped to a fixed point of $\PP^r$ or a multiple cover of a coordinate line. Each node of $C$ and each ramification point of $f$ is also mapped to a fixed point of $\PP^r$. Thus each fixed point $(C,f)$ of the $T$-action on $\overline{M}_{0,0}(\PP^r,d)$ can be associated with a graph $\Gamma$. Let $q_0,\ldots,q_r$ be the fixed points of $\PP^r$ under the torus action.
\begin{enumerate}
\item The vertices of $\Gamma$ are in one-to-one correspondence with the connected components $C_i$ of $f^{-1}(\{q_0,\ldots,q_r\})$, where each $C_i$ is either a point or a non-empty union of irreducible components of $C$.
\item The edges of $\Gamma$ correspond to irreducible components $C_e$ of $C$ which are mapped onto some coordinate line $l_e$ in $\PP^r$.
\end{enumerate}
The graph $\Gamma$ has the following labels: Associate to each vertex $v$ the number $i_v$ defined by $f(C_v) = q_{i_v}$. Associate to each edge $e$ the degree $d_e$ of the map $f|_{C_e}$. The connected components of $\overline{M}_{0,0}(\PP^r,d)^T$ are naturally labelled by equivalence classes of connected graphs $\Gamma$. Furthermore, the following conditions must be satisfied:
\begin{enumerate}
\item If an edge $e$ connects $v$ and $v'$, then $i_v \neq i_{v'}$, and $l_e$ is the coordinate line joining $q_{i_v}$ and $q_{i_{v'}}$.
\item $\sum_{e}d_e = d$.
\end{enumerate}
We denote the number of edges connected to $v$ by $\val(v)$. The stable maps associated with the fixed graph $\Gamma$ define a subspace
$$\overline{M}_{\Gamma} \subset \overline{M}_{0,0}(\PP^r,d).$$
Fix $(C,f) \in \overline{M}_{\Gamma}$. For each vertex $v$ such that the component $C_v$ of $C$ is one dimensional, $C_v$ has $\val(v)$ special points. The data consisting of $C_v$ plus these $\val(v)$ points forms a stable curve, giving an element of $\overline{M}_{0,\val(v)}$. Using the data of $\Gamma$, we can construct a map
$$\psi_\Gamma : \prod_{v:\dim C_v = 1}\overline{M}_{0,\val(v)} \longrightarrow \overline{M}_{\Gamma}.$$
See \cite[Section 9.2.1]{CK} for more details of this construction. Note that $\val(v)$ can be defined for all vertices $v$ and that $\dim C_v = 1$ if and only if $C_v$ contains a component of $C$ contracted by $f$ if and only if $\val(v) \geq 3$.
We define
$$F_{\Gamma} = \prod_{v:\dim C_v = 1}\overline{M}_{0,\val(v)}.$$
If there are no contracted components, then we let $F_{\Gamma}$ be a point. The map $\psi_\Gamma$ is finite. There is a finite group of automorphisms $A_\Gamma$ acting on $F_{\Gamma}$ such that the quotient space is $\overline{M}_{\Gamma}$. The group $A_\Gamma$ fits into an exact sequence
\begin{equation}\label{group}
0 \longrightarrow \prod_e \ZZ/d_e\ZZ \longrightarrow A_\Gamma\longrightarrow \Aut(\Gamma)\longrightarrow 0,
\end{equation}
where $\Aut(\Gamma)$ is the group of automorphisms of $\Gamma$ which preserve the labels. We denote $a_{\Gamma}$ by the order of $A_{\Gamma}$. This number appears in the denominator of the formula in \cite[Corollary 9.1.4]{CK}.

\begin{example}\label{Graph1}
Let us describe the fixed point components of the natural action of $T$ on $\overline{M}_{0,0}(\PP^r,1)$. The possible graphs are of the following type:
\begin{center}
\begin{tikzpicture}[node distance=2.0cm and 2.0cm]
 \node (1) [circle,fill=black,scale=0.3]{};
 \node (2) [circle,fill=black,scale=0.3,right=of 1] {};
 \node at (1) [anchor=north]{$i$};
 \node at (2) [anchor=north]{$j$};
 \node at (1) [anchor=east]{$\Gamma_{i,j}: \quad$};
 \draw[-] (1) to node [anchor=south]{$1$} (2);
\end{tikzpicture}
\end{center}
In these labelled graphs, we have added the labels $i,j \in \{0,1,\ldots,r\}$ with $i \neq j$ below the vertices and the degree $1$ above the edge. For each graph, it is easy to see that $|\Aut(\Gamma_{i,j})| = 2$, hence we have $a_{\Gamma_{i,j}} = 2$. For example, in the case of $\overline{M}_{0,0}(\PP^4,1)$, there are $20$ graphs. Here is the implementation enumerating these graphs in \texttt{schubert.lib}:
\begin{verbatim}
    > variety P = projectiveSpace(4);
    > stack M = moduliSpace(P,1);
    > M;
    A moduli space of dimension 6
    > list F = fixedPoints(M);
    > size(F);
    20
    > F[1];
    [1]:
       A graph with 2 vertices and 1 edges
    [2]:
       2
    > graph G = F[1][1];
    > G;
    A graph with 2 vertices and 1 edges
    > size(G.vertices);
    2
    > size(G.edges);
    1
\end{verbatim}
\end{example}

In \texttt{schubert.lib}, a graph is represented by a list of vertices and a list of edges. For later use we store some additional information in the graph: A vertex of a graph is represented by a list of integers. The first element of such a list is the label of the vertex, the second one is the number of edges containing the vertex, and the remaining data describing the edges which contain the vertex. An edge of a graph is represented by a list of length $3$. The first two entries are the labels of the two vertices of the edge, the last entry is the degree of the irreducible component corresponding to the edge. For example, each graph $\Gamma_{i,j}$ as above has two vertices $(i,1,(i,j,1))$ and $(j,1,(j,i,1))$, and one edge $(i,j,1)$.

\begin{example} \label{Graph2}
Let us describe the fixed point components of the natural action of $T$ on $\overline{M}_{0,0}(\PP^r,2)$. The possible graphs are of the following types:
\begin{center}
\begin{tikzpicture}[node distance=1.0cm and 1.0cm]
 \node (1) [circle,fill=black,scale=0.2,label=below:$i$]{};
 \node (2) [circle,fill=black,scale=0.2,label=below:$j$,right=of 1] {};
 \node at (1) [anchor=east]{$\Gamma_1: \quad$};
 \draw[-] (1) to node [anchor=south]{$2$} (2);
\end{tikzpicture}
\begin{tikzpicture}[node distance=1.0cm and 1.0cm]
 \node (1) [circle,fill=black,scale=0.2]{};
 \node (2) [circle,fill=black,scale=0.2,right=of 1] {};
 \node (3) [circle,fill=black,scale=0.2,right=of 2] {};
 \node at (1) [anchor=north]{$i$};
 \node at (1) [anchor=east]{$\Gamma_2: \quad$};
 \node at (2) [anchor=north]{$j$};
 \node at (3) [anchor=north]{$k$};
 \draw[-] (1) to node [anchor=south]{$1$} (2);
 \draw[-] (2) to node [anchor=south]{$1$} (3);
\end{tikzpicture}
\end{center}
Note that $i,j,k \in \{0,1,\ldots,r\}$, and the vertices of an edge must have different labels. For each type, it is easy to see that $|\Aut(\Gamma_i)| = 2$, hence $a_{\Gamma_1} = 4$ and $a_{\Gamma_2} = 2$. For instance, if $r=1$, then there will be $4$ graphs. If $r=4$, then there will be $100$ graphs.
\end{example}

\begin{example}\label{Graph3}
In the case of $\overline{M}_{0,0}(\PP^r,3)$, the possible graphs have the following types:
\begin{center}
\begin{tikzpicture}[node distance=1.0cm and 1.0cm]
 \node (1) [circle,fill=black,scale=0.2,label=below:$i$]{};
 \node (2) [circle,fill=black,scale=0.2,label=below:$j$,right=of 1] {};
 \node at (1) [anchor=east]{$\Gamma_1: \quad$};
 \draw[-] (1) to node [anchor=south]{$3$} (2);
\end{tikzpicture}
\begin{tikzpicture}[node distance=1.0cm and 1.0cm]
 \node (1) [circle,fill=black,scale=0.2]{};
 \node (2) [circle,fill=black,scale=0.2,right=of 1] {};
 \node (3) [circle,fill=black,scale=0.2,right=of 2] {};
 \node at (1) [anchor=north]{$i$};
 \node at (1) [anchor=east]{$\Gamma_2: \quad$};
 \node at (2) [anchor=north]{$j$};
 \node at (3) [anchor=north]{$k$};
 \draw[-] (1) to node [anchor=south]{$2$} (2);
 \draw[-] (2) to node [anchor=south]{$1$} (3);
\end{tikzpicture}\\
\begin{tikzpicture}[node distance=1.0cm and 1.0cm]
 \node (1) [circle,fill=black,scale=0.2]{};
 \node (2) [circle,fill=black,scale=0.2,right=of 1] {};
 \node (3) [circle,fill=black,scale=0.2,right=of 2] {};
 \node (4) [circle,fill=black,scale=0.2,right=of 3] {};
 \node at (1) [anchor=north]{$i$};
 \node at (1) [anchor=east]{$\Gamma_3: \quad$};
 \node at (2) [anchor=north]{$j$};
 \node at (3) [anchor=north]{$k$};
 \node at (4) [anchor=north]{$h$};
 \draw[-] (1) to node [anchor=south]{$1$} (2);
 \draw[-] (2) to node [anchor=south]{$1$} (3);
 \draw[-] (3) to node [anchor=south]{$1$} (4);
\end{tikzpicture}
\begin{tikzpicture}[node distance=1.0cm and 1.0cm]
 \node (1) [circle,fill=black,scale=0.2]{};
 \node (2) [circle,fill=black,scale=0.2,right=of 1] {};
 \node (3) [circle,fill=black,scale=0.2,right=of 2] {};
 \node (4) [circle,fill=black,scale=0.2,above=of 2] {};
 \node at (1) [anchor=north]{$i$};
 \node at (1) [anchor=east]{$\Gamma_4: \quad$};
 \node at (2) [anchor=north]{$j$};
 \node at (3) [anchor=north]{$k$};
 \node at (4) [anchor=south]{$h$};
 \draw[-] (1) to node [anchor=south]{$1$} (2);
 \draw[-] (2) to node [anchor=south]{$1$} (3);
 \draw[-] (2) to node [anchor=west]{$1$} (4);
\end{tikzpicture}
\end{center}
Note that $i,j,k,h \in \{0,1,\ldots,r\}$, and the vertices of an edge must have different labels. It is easy to see that $|\Aut(\Gamma_1)| = 2, |\Aut(\Gamma_2)| = 1, |\Aut(\Gamma_3)| = 2$, and $|\Aut(\Gamma_4)| = 6$. Hence $a_{\Gamma_1} = 6, a_{\Gamma_2} = a_{\Gamma_3} = 2$, and $a_{\Gamma_4} = 6$.
\end{example}

\begin{example}\label{Graph4}
In the case of $\overline{M}_{0,0}(\PP^r,4)$, the possible graphs have the following types:
\begin{center}
\begin{tikzpicture}[node distance=1.0cm and 1.0cm]
 \node (1) [circle,fill=black,scale=0.2,label=below:$i$]{};
 \node (2) [circle,fill=black,scale=0.2,label=below:$j$,right=of 1] {};
 \node at (1) [anchor=east]{$\Gamma_1: \quad$};
 \draw[-] (1) to node [anchor=south]{$4$} (2);
\end{tikzpicture}
\begin{tikzpicture}[node distance=1.0cm and 1.0cm]
 \node (1) [circle,fill=black,scale=0.2]{};
 \node (2) [circle,fill=black,scale=0.2,right=of 1] {};
 \node (3) [circle,fill=black,scale=0.2,right=of 2] {};
 \node at (1) [anchor=north]{$i$};
 \node at (1) [anchor=east]{$\Gamma_2: \quad$};
 \node at (2) [anchor=north]{$j$};
 \node at (3) [anchor=north]{$k$};
 \draw[-] (1) to node [anchor=south]{$3$} (2);
 \draw[-] (2) to node [anchor=south]{$1$} (3);
\end{tikzpicture}
\begin{tikzpicture}[node distance=1.0cm and 1.0cm]
 \node (1) [circle,fill=black,scale=0.2]{};
 \node (2) [circle,fill=black,scale=0.2,right=of 1] {};
 \node (3) [circle,fill=black,scale=0.2,right=of 2] {};
 \node at (1) [anchor=north]{$i$};
 \node at (1) [anchor=east]{$\Gamma_3: \quad$};
 \node at (2) [anchor=north]{$j$};
 \node at (3) [anchor=north]{$k$};
 \draw[-] (1) to node [anchor=south]{$2$} (2);
 \draw[-] (2) to node [anchor=south]{$2$} (3);
\end{tikzpicture}\\
\begin{tikzpicture}[node distance=1.0cm and 1.0cm]
 \node (1) [circle,fill=black,scale=0.2]{};
 \node (2) [circle,fill=black,scale=0.2,right=of 1] {};
 \node (3) [circle,fill=black,scale=0.2,right=of 2] {};
 \node (4) [circle,fill=black,scale=0.2,right=of 3] {};
 \node at (1) [anchor=north]{$i$};
 \node at (1) [anchor=east]{$\Gamma_4: \quad$};
 \node at (2) [anchor=north]{$j$};
 \node at (3) [anchor=north]{$k$};
 \node at (4) [anchor=north]{$h$};
 \draw[-] (1) to node [anchor=south]{$2$} (2);
 \draw[-] (2) to node [anchor=south]{$1$} (3);
 \draw[-] (3) to node [anchor=south]{$1$} (4);
\end{tikzpicture}
\begin{tikzpicture}[node distance=1.0cm and 1.0cm]
 \node (1) [circle,fill=black,scale=0.2]{};
 \node (2) [circle,fill=black,scale=0.2,right=of 1] {};
 \node (3) [circle,fill=black,scale=0.2,right=of 2] {};
 \node (4) [circle,fill=black,scale=0.2,right=of 3] {};
 \node at (1) [anchor=north]{$i$};
 \node at (1) [anchor=east]{$\Gamma_5: \quad$};
 \node at (2) [anchor=north]{$j$};
 \node at (3) [anchor=north]{$k$};
 \node at (4) [anchor=north]{$h$};
 \draw[-] (1) to node [anchor=south]{$1$} (2);
 \draw[-] (2) to node [anchor=south]{$2$} (3);
 \draw[-] (3) to node [anchor=south]{$1$} (4);
\end{tikzpicture}\\
\begin{tikzpicture}[node distance=1.0cm and 1.0cm]
 \node (1) [circle,fill=black,scale=0.2]{};
 \node (2) [circle,fill=black,scale=0.2,right=of 1] {};
 \node (3) [circle,fill=black,scale=0.2,right=of 2] {};
 \node (4) [circle,fill=black,scale=0.2,above=of 2] {};
 \node at (1) [anchor=north]{$i$};
 \node at (1) [anchor=east]{$\Gamma_6: \quad$};
 \node at (2) [anchor=north]{$j$};
 \node at (3) [anchor=north]{$k$};
 \node at (4) [anchor=south]{$h$};
 \draw[-] (1) to node [anchor=south]{$2$} (2);
 \draw[-] (2) to node [anchor=south]{$1$} (3);
 \draw[-] (2) to node [anchor=west]{$1$} (4);
\end{tikzpicture}
\begin{tikzpicture}[node distance=1.0cm and 1.0cm]
 \node (1) [circle,fill=black,scale=0.2]{};
 \node (2) [circle,fill=black,scale=0.2,right=of 1] {};
 \node (3) [circle,fill=black,scale=0.2,right=of 2] {};
 \node (4) [circle,fill=black,scale=0.2,right=of 3] {};
 \node (5) [circle,fill=black,scale=0.2,right=of 4] {};
 \node at (1) [anchor=north]{$i$};
 \node at (1) [anchor=east]{$\Gamma_7: \quad$};
 \node at (2) [anchor=north]{$j$};
 \node at (3) [anchor=north]{$k$};
 \node at (4) [anchor=north]{$h$};
 \node at (5) [anchor=north]{$m$};
 \draw[-] (1) to node [anchor=south]{$1$} (2);
 \draw[-] (2) to node [anchor=south]{$1$} (3);
 \draw[-] (3) to node [anchor=south]{$1$} (4);
 \draw[-] (4) to node [anchor=south]{$1$} (5);
\end{tikzpicture}\\
\begin{tikzpicture}[node distance=1.0cm and 1.0cm]
 \node (1) [circle,fill=black,scale=0.2]{};
 \node (2) [circle,fill=black,scale=0.2,right=of 1] {};
 \node (3) [circle,fill=black,scale=0.2,right=of 2] {};
 \node (4) [circle,fill=black,scale=0.2,right=of 3] {};
 \node (5) [circle,fill=black,scale=0.2,above=of 3] {};
 \node at (1) [anchor=north]{$i$};
 \node at (1) [anchor=east]{$\Gamma_8: \quad$};
 \node at (2) [anchor=north]{$j$};
 \node at (3) [anchor=north]{$k$};
 \node at (4) [anchor=north]{$h$};
 \node at (5) [anchor=south]{$m$};
 \draw[-] (1) to node [anchor=south]{$1$} (2);
 \draw[-] (2) to node [anchor=south]{$1$} (3);
 \draw[-] (3) to node [anchor=south]{$1$} (4);
 \draw[-] (3) to node [anchor=west]{$1$} (5);
\end{tikzpicture}
\begin{tikzpicture}[node distance=1.0cm and 1.0cm]
 \node (1) [circle,fill=black,scale=0.2]{};
 \node (2) [circle,fill=black,scale=0.2,right=of 1] {};
 \node (3) [circle,fill=black,scale=0.2,above=of 2] {};
 \node (4) [circle,fill=black,scale=0.2,right=of 2] {};
 \node (5) [circle,fill=black,scale=0.2,below=of 2] {};
 \node at (1) [anchor=north]{$i$};
 \node at (1) [anchor=east]{$\Gamma_9: \quad$};
 \node at (2) [anchor=north east]{$j$};
 \node at (3) [anchor=south]{$k$};
 \node at (4) [anchor=north]{$h$};
 \node at (5) [anchor=north]{$m$};
 \draw[-] (1) to node [anchor=south]{$1$} (2);
 \draw[-] (2) to node [anchor=west]{$1$} (3);
 \draw[-] (2) to node [anchor=south]{$1$} (4);
 \draw[-] (2) to node [anchor=west]{$1$} (5);
\end{tikzpicture}
\end{center}
Note that $i,j,k,h,m \in \{0,1,\ldots,r\}$, and the vertices of an edge must have different labels. For these graphs, we have $|\Aut(\Gamma_1)| = |\Aut(\Gamma_3)| = |\Aut(\Gamma_5)| = |\Aut(\Gamma_6)| = |\Aut(\Gamma_7)| = |\Aut(\Gamma_8)| = 2, |\Aut(\Gamma_2)| = |\Aut(\Gamma_4)| = 1$, and $|\Aut(\Gamma_9)| = 24$. Hence $a_{\Gamma_1} = a_{\Gamma_3} = 8, a_{\Gamma_2} = 3, a_{\Gamma_4} = a_{\Gamma_7} = a_{\Gamma_8} = 2, a_{\Gamma_5} = a_{\Gamma_6} = 4$, and $a_{\Gamma_9} = 24$.
\end{example}

\begin{example}\label{Graph5} In the case of $\overline{M}_{0,0}(\PP^r,5)$, in addition to the analogues of the graphs above, we have the following graphs:

\begin{center}
\begin{tikzpicture}[node distance=1.0cm and 1.0cm]
 \node (1) [circle,fill=black,scale=0.2]{};
 \node (2) [circle,fill=black,scale=0.2,right=of 1] {};
 \node (3) [circle,fill=black,scale=0.2,right=of 2] {};
 \node (4) [circle,fill=black,scale=0.2,right=of 3] {};
 \node (5) [circle,fill=black,scale=0.2,right=of 4] {};
 \node (6) [circle,fill=black,scale=0.2,right=of 5] {};
 \node at (1) [anchor=north]{$i$};
 \node at (1) [anchor=east]{$\Gamma_{1}: \quad$};
 \node at (2) [anchor=north]{$j$};
 \node at (3) [anchor=north]{$k$};
 \node at (4) [anchor=north]{$h$};
 \node at (5) [anchor=north]{$m$};
 \node at (6) [anchor=north]{$n$};
 \draw[-] (1) to node [anchor=south]{$1$} (2);
 \draw[-] (2) to node [anchor=south]{$1$} (3);
 \draw[-] (3) to node [anchor=south]{$1$} (4);
 \draw[-] (4) to node [anchor=south]{$1$} (5);
 \draw[-] (5) to node [anchor=south]{$1$} (6);
\end{tikzpicture}
\begin{tikzpicture}[node distance=1.0cm and 1.0cm]
 \node (1) [circle,fill=black,scale=0.2]{};
 \node (2) [circle,fill=black,scale=0.2,right=of 1] {};
 \node (3) [circle,fill=black,scale=0.2,right=of 2] {};
 \node (4) [circle,fill=black,scale=0.2,right=of 3] {};
 \node (5) [circle,fill=black,scale=0.2,right=of 4] {};
 \node (6) [circle,fill=black,scale=0.2,above=of 4] {};
 \node at (1) [anchor=north]{$i$};
 \node at (1) [anchor=east]{$\Gamma_{2}: \quad$};
 \node at (2) [anchor=north]{$j$};
 \node at (3) [anchor=north]{$k$};
 \node at (4) [anchor=north]{$h$};
 \node at (5) [anchor=north]{$m$};
 \node at (6) [anchor=south]{$n$};
 \draw[-] (1) to node [anchor=south]{$1$} (2);
 \draw[-] (2) to node [anchor=south]{$1$} (3);
 \draw[-] (3) to node [anchor=south]{$1$} (4);
 \draw[-] (4) to node [anchor=south]{$1$} (5);
 \draw[-] (4) to node [anchor=west]{$1$} (6);
\end{tikzpicture} \\
\begin{tikzpicture}[node distance=1.0cm and 1.0cm]
 \node (1) [circle,fill=black,scale=0.2]{};
 \node (2) [circle,fill=black,scale=0.2,right=of 1] {};
 \node (3) [circle,fill=black,scale=0.2,right=of 2] {};
 \node (4) [circle,fill=black,scale=0.2,right=of 3] {};
 \node (5) [circle,fill=black,scale=0.2,right=of 4] {};
 \node (6) [circle,fill=black,scale=0.2,above=of 3] {};
 \node at (1) [anchor=north]{$i$};
 \node at (1) [anchor=east]{$\Gamma_{3}: \quad$};
 \node at (2) [anchor=north]{$j$};
 \node at (3) [anchor=north]{$k$};
 \node at (4) [anchor=north]{$h$};
 \node at (5) [anchor=north]{$m$};
 \node at (6) [anchor=south]{$n$};
 \draw[-] (1) to node [anchor=south]{$1$} (2);
 \draw[-] (2) to node [anchor=south]{$1$} (3);
 \draw[-] (3) to node [anchor=south]{$1$} (4);
 \draw[-] (4) to node [anchor=south]{$1$} (5);
 \draw[-] (3) to node [anchor=west]{$1$} (6);
\end{tikzpicture}
\begin{tikzpicture}[node distance=1.0cm and 1.0cm]
 \node (1) [circle,fill=black,scale=0.2]{};
 \node (2) [circle,fill=black,scale=0.2,right=of 1] {};
 \node (3) [circle,fill=black,scale=0.2,right=of 2] {};
 \node (4) [circle,fill=black,scale=0.2,right=of 3] {};
 \node (5) [circle,fill=black,scale=0.2,above=of 3] {};
 \node (6) [circle,fill=black,scale=0.2,below=of 3] {};
 \node at (1) [anchor=north]{$i$};
 \node at (1) [anchor=east]{$\Gamma_{4}: \quad$};
 \node at (2) [anchor=north]{$j$};
 \node at (3) [anchor=north east]{$k$};
 \node at (4) [anchor=west]{$h$};
 \node at (5) [anchor=south]{$m$};
 \node at (6) [anchor=north]{$n$};
 \draw[-] (1) to node [anchor=south]{$1$} (2);
 \draw[-] (2) to node [anchor=south]{$1$} (3);
 \draw[-] (3) to node [anchor=south]{$1$} (4);
 \draw[-] (3) to node [anchor=west]{$1$} (5);
 \draw[-] (3) to node [anchor=west]{$1$} (6);
\end{tikzpicture}\\
\begin{tikzpicture}[node distance=1.0cm and 1.0cm]
 \node (1) [circle,fill=black,scale=0.2]{};
 \node (2) [circle,fill=black,scale=0.2,right=of 1] {};
 \node (3) [circle,fill=black,scale=0.2,right=of 2] {};
 \node (4) [circle,fill=black,scale=0.2,right=of 3] {};
 \node (5) [circle,fill=black,scale=0.2,above=of 2] {};
 \node (6) [circle,fill=black,scale=0.2,above=of 3] {};
 \node at (1) [anchor=north]{$i$};
 \node at (1) [anchor=east]{$\Gamma_{5}: \quad$};
 \node at (2) [anchor=north]{$j$};
 \node at (3) [anchor=north]{$k$};
 \node at (4) [anchor=north]{$h$};
 \node at (5) [anchor=south]{$m$};
 \node at (6) [anchor=south]{$n$};
 \draw[-] (1) to node [anchor=south]{$1$} (2);
 \draw[-] (2) to node [anchor=south]{$1$} (3);
 \draw[-] (3) to node [anchor=south]{$1$} (4);
 \draw[-] (2) to node [anchor=west]{$1$} (5);
 \draw[-] (3) to node [anchor=west]{$1$} (6);
\end{tikzpicture}
\begin{tikzpicture}[node distance=1.0cm and 1.0cm]
 \node (1) [circle,fill=black,scale=0.2]{};
 \node (2) [circle,fill=black,scale=0.2,right=of 1] {};
 \node (3) [circle,fill=black,scale=0.2,right=of 2] {};
 \node (4) [circle,fill=black,scale=0.2,above=of 2] {};
 \node (5) [circle,fill=black,scale=0.2,above right=of 2] {};
 \node (6) [circle,fill=black,scale=0.2,below=of 2] {};
 \node at (1) [anchor=north]{$i$};
 \node at (1) [anchor=east]{$\Gamma_{6}: \quad$};
 \node at (2) [anchor=north east]{$j$};
 \node at (3) [anchor=west]{$k$};
 \node at (4) [anchor=south]{$h$};
 \node at (5) [anchor=west]{$m$};
 \node at (6) [anchor=north]{$n$};
 \draw[-] (1) to node [anchor=south]{$1$} (2);
 \draw[-] (2) to node [anchor=south]{$1$} (3);
 \draw[-] (2) to node [anchor=east]{$1$} (4);
 \draw[-] (2) to node [anchor=south]{$1$} (5);
 \draw[-] (2) to node [anchor=west]{$1$} (6);
\end{tikzpicture}
\end{center}
Note that $i,j,k,h,m,n \in \{0,1,\ldots,r\}$, and the vertices of an edge must have different labels. For these graphs, we have $a_{\Gamma_1} = a_{\Gamma_2} = a_{\Gamma_3} = 2, a_{\Gamma_4} = 6, a_{\Gamma_5} = 8$, and $a_{\Gamma_6} = 120$.
\end{example}

\begin{example}\label{Graph6}
In the case of $\overline{M}_{0,0}(\PP^r,6)$, in addition to the analogues of the graphs above, we have the following graphs:
\begin{center}
\begin{tikzpicture}[node distance=1.0cm and 1.0cm]
 \node (1) [circle,fill=black,scale=0.2]{};
 \node (2) [circle,fill=black,scale=0.2,right=of 1] {};
 \node (3) [circle,fill=black,scale=0.2,right=of 2] {};
 \node (4) [circle,fill=black,scale=0.2,right=of 3] {};
 \node (5) [circle,fill=black,scale=0.2,right=of 4] {};
 \node (6) [circle,fill=black,scale=0.2,right=of 5] {};
 \node (7) [circle,fill=black,scale=0.2,right=of 6] {};
 \node at (1) [anchor=north]{$i$};
 \node at (1) [anchor=east]{$\Gamma_{1}: \quad$};
 \node at (2) [anchor=north]{$j$};
 \node at (3) [anchor=north]{$k$};
 \node at (4) [anchor=north]{$h$};
 \node at (5) [anchor=north]{$m$};
 \node at (6) [anchor=north]{$n$};
 \node at (7) [anchor=north]{$p$};
 \draw[-] (1) to node [anchor=south]{$1$} (2);
 \draw[-] (2) to node [anchor=south]{$1$} (3);
 \draw[-] (3) to node [anchor=south]{$1$} (4);
 \draw[-] (4) to node [anchor=south]{$1$} (5);
 \draw[-] (5) to node [anchor=south]{$1$} (6);
 \draw[-] (6) to node [anchor=south]{$1$} (7);
\end{tikzpicture} \\
\begin{tikzpicture}[node distance=1.0cm and 1.0cm]
 \node (1) [circle,fill=black,scale=0.2]{};
 \node (2) [circle,fill=black,scale=0.2,right=of 1] {};
 \node (3) [circle,fill=black,scale=0.2,right=of 2] {};
 \node (4) [circle,fill=black,scale=0.2,right=of 3] {};
 \node (5) [circle,fill=black,scale=0.2,right=of 4] {};
 \node (6) [circle,fill=black,scale=0.2,right=of 5] {};
 \node (7) [circle,fill=black,scale=0.2,above=of 5] {};
 \node at (1) [anchor=north]{$i$};
 \node at (1) [anchor=east]{$\Gamma_{2}: \quad$};
 \node at (2) [anchor=north]{$j$};
 \node at (3) [anchor=north]{$k$};
 \node at (4) [anchor=north]{$h$};
 \node at (5) [anchor=north]{$m$};
 \node at (6) [anchor=north]{$n$};
 \node at (7) [anchor=south]{$p$};
 \draw[-] (1) to node [anchor=south]{$1$} (2);
 \draw[-] (2) to node [anchor=south]{$1$} (3);
 \draw[-] (3) to node [anchor=south]{$1$} (4);
 \draw[-] (4) to node [anchor=south]{$1$} (5);
 \draw[-] (5) to node [anchor=south]{$1$} (6);
 \draw[-] (5) to node [anchor=west]{$1$} (7);
\end{tikzpicture} \\
\begin{tikzpicture}[node distance=1.0cm and 1.0cm]
 \node (1) [circle,fill=black,scale=0.2]{};
 \node (2) [circle,fill=black,scale=0.2,right=of 1] {};
 \node (3) [circle,fill=black,scale=0.2,right=of 2] {};
 \node (4) [circle,fill=black,scale=0.2,right=of 3] {};
 \node (5) [circle,fill=black,scale=0.2,right=of 4] {};
 \node (6) [circle,fill=black,scale=0.2,right=of 5] {};
 \node (7) [circle,fill=black,scale=0.2,above=of 4] {};
 \node at (1) [anchor=north]{$i$};
 \node at (1) [anchor=east]{$\Gamma_{3}: \quad$};
 \node at (2) [anchor=north]{$j$};
 \node at (3) [anchor=north]{$k$};
 \node at (4) [anchor=north]{$h$};
 \node at (5) [anchor=north]{$m$};
 \node at (6) [anchor=north]{$n$};
 \node at (7) [anchor=south]{$p$};
 \draw[-] (1) to node [anchor=south]{$1$} (2);
 \draw[-] (2) to node [anchor=south]{$1$} (3);
 \draw[-] (3) to node [anchor=south]{$1$} (4);
 \draw[-] (4) to node [anchor=south]{$1$} (5);
 \draw[-] (5) to node [anchor=south]{$1$} (6);
 \draw[-] (4) to node [anchor=west]{$1$} (7);
\end{tikzpicture} \\
\begin{tikzpicture}[node distance=1.0cm and 1.0cm]
 \node (1) [circle,fill=black,scale=0.2]{};
 \node (2) [circle,fill=black,scale=0.2,right=of 1] {};
 \node (3) [circle,fill=black,scale=0.2,right=of 2] {};
 \node (4) [circle,fill=black,scale=0.2,right=of 3] {};
 \node (5) [circle,fill=black,scale=0.2,right=of 4] {};
 \node (6) [circle,fill=black,scale=0.2,below=of 4] {};
 \node (7) [circle,fill=black,scale=0.2,above=of 4] {};
 \node at (1) [anchor=north]{$i$};
 \node at (1) [anchor=east]{$\Gamma_{4}: \quad$};
 \node at (2) [anchor=north]{$j$};
 \node at (3) [anchor=north]{$k$};
 \node at (4) [anchor=north east]{$h$};
 \node at (5) [anchor=north]{$m$};
 \node at (6) [anchor=north]{$n$};
 \node at (7) [anchor=south]{$p$};
 \draw[-] (1) to node [anchor=south]{$1$} (2);
 \draw[-] (2) to node [anchor=south]{$1$} (3);
 \draw[-] (3) to node [anchor=south]{$1$} (4);
 \draw[-] (4) to node [anchor=south]{$1$} (5);
 \draw[-] (4) to node [anchor=west]{$1$} (6);
 \draw[-] (4) to node [anchor=west]{$1$} (7);
\end{tikzpicture}
\begin{tikzpicture}[node distance=1.0cm and 1.0cm]
 \node (1) [circle,fill=black,scale=0.2]{};
 \node (2) [circle,fill=black,scale=0.2,right=of 1] {};
 \node (3) [circle,fill=black,scale=0.2,right=of 2] {};
 \node (4) [circle,fill=black,scale=0.2,right=of 3] {};
 \node (5) [circle,fill=black,scale=0.2,right=of 4] {};
 \node (6) [circle,fill=black,scale=0.2,below=of 3] {};
 \node (7) [circle,fill=black,scale=0.2,above=of 3] {};
 \node at (1) [anchor=north]{$i$};
 \node at (1) [anchor=east]{$\Gamma_{5}: \quad$};
 \node at (2) [anchor=north]{$j$};
 \node at (3) [anchor=north east]{$k$};
 \node at (4) [anchor=north]{$h$};
 \node at (5) [anchor=north]{$m$};
 \node at (6) [anchor=north]{$n$};
 \node at (7) [anchor=south]{$p$};
 \draw[-] (1) to node [anchor=south]{$1$} (2);
 \draw[-] (2) to node [anchor=south]{$1$} (3);
 \draw[-] (3) to node [anchor=south]{$1$} (4);
 \draw[-] (4) to node [anchor=south]{$1$} (5);
 \draw[-] (3) to node [anchor=west]{$1$} (6);
 \draw[-] (3) to node [anchor=west]{$1$} (7);
\end{tikzpicture} \\
\begin{tikzpicture}[node distance=1.0cm and 1.0cm]
 \node (1) [circle,fill=black,scale=0.2]{};
 \node (2) [circle,fill=black,scale=0.2,right=of 1] {};
 \node (3) [circle,fill=black,scale=0.2,right=of 2] {};
 \node (4) [circle,fill=black,scale=0.2,right=of 3] {};
 \node (5) [circle,fill=black,scale=0.2,right=of 4] {};
 \node (6) [circle,fill=black,scale=0.2,above=of 2] {};
 \node (7) [circle,fill=black,scale=0.2,above=of 4] {};
 \node at (1) [anchor=north]{$i$};
 \node at (1) [anchor=east]{$\Gamma_{6}: \quad$};
 \node at (2) [anchor=north]{$j$};
 \node at (3) [anchor=north]{$k$};
 \node at (4) [anchor=north]{$h$};
 \node at (5) [anchor=north]{$m$};
 \node at (6) [anchor=south]{$n$};
 \node at (7) [anchor=south]{$p$};
 \draw[-] (1) to node [anchor=south]{$1$} (2);
 \draw[-] (2) to node [anchor=south]{$1$} (3);
 \draw[-] (3) to node [anchor=south]{$1$} (4);
 \draw[-] (4) to node [anchor=south]{$1$} (5);
 \draw[-] (2) to node [anchor=west]{$1$} (6);
 \draw[-] (4) to node [anchor=west]{$1$} (7);
\end{tikzpicture}
\begin{tikzpicture}[node distance=1.0cm and 1.0cm]
 \node (1) [circle,fill=black,scale=0.2]{};
 \node (2) [circle,fill=black,scale=0.2,right=of 1] {};
 \node (3) [circle,fill=black,scale=0.2,right=of 2] {};
 \node (4) [circle,fill=black,scale=0.2,right=of 3] {};
 \node (5) [circle,fill=black,scale=0.2,right=of 4] {};
 \node (6) [circle,fill=black,scale=0.2,above=of 3] {};
 \node (7) [circle,fill=black,scale=0.2,above=of 4] {};
 \node at (1) [anchor=north]{$i$};
 \node at (1) [anchor=east]{$\Gamma_{7}: \quad$};
 \node at (2) [anchor=north]{$j$};
 \node at (3) [anchor=north]{$k$};
 \node at (4) [anchor=north]{$h$};
 \node at (5) [anchor=north]{$m$};
 \node at (6) [anchor=south]{$n$};
 \node at (7) [anchor=south]{$p$};
 \draw[-] (1) to node [anchor=south]{$1$} (2);
 \draw[-] (2) to node [anchor=south]{$1$} (3);
 \draw[-] (3) to node [anchor=south]{$1$} (4);
 \draw[-] (4) to node [anchor=south]{$1$} (5);
 \draw[-] (3) to node [anchor=west]{$1$} (6);
 \draw[-] (4) to node [anchor=west]{$1$} (7);
\end{tikzpicture} \\
\begin{tikzpicture}[node distance=1.0cm and 1.0cm]
 \node (1) [circle,fill=black,scale=0.2]{};
 \node (2) [circle,fill=black,scale=0.2,right=of 1] {};
 \node (3) [circle,fill=black,scale=0.2,right=of 2] {};
 \node (4) [circle,fill=black,scale=0.2,right=of 3] {};
 \node (5) [circle,fill=black,scale=0.2,right=of 4] {};
 \node (6) [circle,fill=black,scale=0.2,above=of 3] {};
 \node (7) [circle,fill=black,scale=0.2,above=of 6] {};
 \node at (1) [anchor=north]{$i$};
 \node at (1) [anchor=east]{$\Gamma_{8}: \quad$};
 \node at (2) [anchor=north]{$j$};
 \node at (3) [anchor=north]{$k$};
 \node at (4) [anchor=north]{$h$};
 \node at (5) [anchor=north]{$m$};
 \node at (6) [anchor=east]{$n$};
 \node at (7) [anchor=south]{$p$};
 \draw[-] (1) to node [anchor=south]{$1$} (2);
 \draw[-] (2) to node [anchor=south]{$1$} (3);
 \draw[-] (3) to node [anchor=south]{$1$} (4);
 \draw[-] (4) to node [anchor=south]{$1$} (5);
 \draw[-] (3) to node [anchor=west]{$1$} (6);
 \draw[-] (6) to node [anchor=west]{$1$} (7);
\end{tikzpicture}
\begin{tikzpicture}[node distance=1.0cm and 1.0cm]
 \node (1) [circle,fill=black,scale=0.2]{};
 \node (2) [circle,fill=black,scale=0.2,right=of 1] {};
 \node (3) [circle,fill=black,scale=0.2,right=of 2] {};
 \node (4) [circle,fill=black,scale=0.2,right=of 3] {};
 \node (5) [circle,fill=black,scale=0.2,below=of 2] {};
 \node (6) [circle,fill=black,scale=0.2,below=of 3] {};
 \node (7) [circle,fill=black,scale=0.2,above=of 3] {};
 \node at (1) [anchor=north]{$i$};
 \node at (1) [anchor=east]{$\Gamma_{9}: \quad$};
 \node at (2) [anchor=south]{$j$};
 \node at (3) [anchor=north east]{$k$};
 \node at (4) [anchor=north]{$h$};
 \node at (5) [anchor=north]{$m$};
 \node at (6) [anchor=north]{$n$};
 \node at (7) [anchor=south]{$p$};
 \draw[-] (1) to node [anchor=south]{$1$} (2);
 \draw[-] (2) to node [anchor=south]{$1$} (3);
 \draw[-] (3) to node [anchor=south]{$1$} (4);
 \draw[-] (2) to node [anchor=west]{$1$} (5);
 \draw[-] (3) to node [anchor=west]{$1$} (6);
 \draw[-] (3) to node [anchor=west]{$1$} (7);
\end{tikzpicture} \\
\begin{tikzpicture}[node distance=1.0cm and 1.0cm]
 \node (1) [circle,fill=black,scale=0.2]{};
 \node (2) [circle,fill=black,scale=0.2,right=of 1] {};
 \node (3) [circle,fill=black,scale=0.2,right=of 2] {};
 \node (4) [circle,fill=black,scale=0.2,right=of 3] {};
 \node (5) [circle,fill=black,scale=0.2,below=of 3] {};
 \node (6) [circle,fill=black,scale=0.2,above left=of 3] {};
 \node (7) [circle,fill=black,scale=0.2,above right=of 3] {};
 \node at (1) [anchor=north]{$i$};
 \node at (1) [anchor=east]{$\Gamma_{10}: \quad$};
 \node at (2) [anchor=north]{$j$};
 \node at (3) [anchor=north east]{$k$};
 \node at (4) [anchor=north]{$h$};
 \node at (5) [anchor=north]{$m$};
 \node at (6) [anchor=south]{$n$};
 \node at (7) [anchor=south]{$p$};
 \draw[-] (1) to node [anchor=south]{$1$} (2);
 \draw[-] (2) to node [anchor=south]{$1$} (3);
 \draw[-] (3) to node [anchor=south]{$1$} (4);
 \draw[-] (3) to node [anchor=west]{$1$} (5);
 \draw[-] (3) to node [anchor=south]{$1$} (6);
 \draw[-] (3) to node [anchor=south]{$1$} (7);
\end{tikzpicture}
\begin{tikzpicture}[node distance=1.0cm and 1.0cm]
 \node (1) [circle,fill=black,scale=0.2]{};
 \node (2) [circle,fill=black,scale=0.2,right=of 1] {};
 \node (3) [circle,fill=black,scale=0.2,right=of 2] {};
 \node (4) [circle,fill=black,scale=0.2,below=of 2] {};
 \node (5) [circle,fill=black,scale=0.2,above=of 2] {};
 \node (6) [circle,fill=black,scale=0.2,above left=of 2] {};
 \node (7) [circle,fill=black,scale=0.2,above right=of 2] {};
 \node at (1) [anchor=north]{$i$};
 \node at (1) [anchor=east]{$\Gamma_{11}: \quad$};
 \node at (2) [anchor=north east]{$j$};
 \node at (3) [anchor=north]{$k$};
 \node at (4) [anchor=north]{$h$};
 \node at (5) [anchor=south]{$m$};
 \node at (6) [anchor=south]{$n$};
 \node at (7) [anchor=south]{$p$};
 \draw[-] (1) to node [anchor=south]{$1$} (2);
 \draw[-] (2) to node [anchor=south]{$1$} (3);
 \draw[-] (2) to node [anchor=west]{$1$} (4);
 \draw[-] (2) to node [anchor=south west]{$1$} (5);
 \draw[-] (2) to node [anchor=south]{$1$} (6);
 \draw[-] (2) to node [anchor=south]{$1$} (7);
\end{tikzpicture}
\end{center}
Note that $i,j,k,h,m,n,p \in \{0,1,\ldots,r\}$, and the vertices of an edge must have different labels. For these graphs, we have $a_{\Gamma_1} = a_{\Gamma_2} = a_{\Gamma_7} = 2, a_{\Gamma_3} = 1, a_{\Gamma_4} = a_{\Gamma_8} = 6, a_{\Gamma_5} = 4, a_{\Gamma_6} = 8, a_{\Gamma_9} = 12, a_{\Gamma_{10}} = 24$, and $a_{\Gamma_{11}} = 720$.
\end{example}

In order to apply Bott's formula to $\overline{M}_{0,0}(\PP^r,d)$, we need a formula for computing the $T$-equivariant Euler class of the normal bundle $N_\Gamma$ of $\overline{M}_{\Gamma}$. We define a flag $F$ of a graph to be a pair $(v,e)$ such that $v$ is a vertex of $e$. Put $i(F) = v$ and let $j(F)$ be the other vertex of $e$. Set
$$\omega_F = \frac{\lambda_{i_{i(F)}} - \lambda_{i_{j(F)}}}{d_e}.$$
This corresponds to the weight of a torus action on the tangent space of the component $C_e$ of $C$ at the point $p_F$ lying over $i_v$. If $v$ is a vertex with $\val(v) = 1$, then we denote by $F(v)$ the unique flag containing $v$. If $\val(v) = n \geq 2$, then we denote by $F_1(v), F_2(v), \ldots, F_n(v)$ the $n$ flags containing $v$. We also denote by $v_1(e)$ and $v_2(e)$ the two vertices of an edge $e$.

\begin{theorem}[Kontsevich, \cite{Kon2}] \label{NormalBundle}
The $T$-equivariant Euler class of the normal bundle $N_\Gamma$ is a product of contributions from the vertices and edges. More precisely, we have
$$e^T(N_\Gamma) = e_\Gamma^ve_\Gamma^e,$$
where $e_\Gamma^v$ and $e_\Gamma^e$ are defined by the following formulas:
$$e_\Gamma^v = \prod_{v}\left(\left(\prod_{j\neq i_v}(\lambda_{i_v} - \lambda_j)\right)^{1-\val(v)}\left(\sum_i\omega_{F_i(v)}^{-1}\right)^{3-\val(v)}\prod_i\omega_{F_i(v)}\right),$$
$$e_\Gamma^e = \prod_e\left(\frac{(-1)^{d_e}(d_e!)^2(\lambda_{i_{v_1(e)}}-\lambda_{i_{v_2(e)}})^{2d_e}}{d_e^{2d_e}} \prod_{a,b \in \NN,a+b=d_e,k\neq i_{v_j(e)}}\left(\frac{a\lambda_{i_{v_1(e)}} + b\lambda_{i_{v_2(e)}}}{d_e} - \lambda_k\right)\right).$$
\end{theorem}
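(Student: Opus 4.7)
The plan is to prove the formula by identifying $e^T(N_\Gamma)$ as the product of non-zero $T$-weights appearing in the virtual tangent space of $\overline{M}_{0,0}(\PP^r,d)$ at a point of $\overline{M}_\Gamma$, and then showing these weights organize themselves into a product of local contributions indexed by the vertices and edges of $\Gamma$. Since $\PP^r$ is convex, we have $H^1(C, f^*T_{\PP^r}) = 0$, so the deformation problem is unobstructed and the tangent space at $(C,f)$ sits in a standard exact sequence involving $H^0(C, f^*T_{\PP^r})$ together with the deformation and automorphism spaces of $C$ as a prestable curve. The fixed weights cancel between $T_{(C,f)}\overline{M}_\Gamma$ and the ambient tangent space, leaving the moving weights which give $e^T(N_\Gamma)$.

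Next, I would decompose these cohomologies using the partial normalization $\nu : \widetilde{C} \to C$, which breaks $C$ into its irreducible components: one copy of $\PP^1$ for each edge $e$, on which $f$ restricts to a degree-$d_e$ cover of the coordinate line $l_e$, and one contracted curve $C_v$ for each vertex $v$ with $\val(v) \geq 3$. The normalization sequence for $f^*T_{\PP^r}$ and the dual sequence for $\Omega_C$ split the global deformation problem into local pieces supported on components, together with smoothing contributions at each node. Each node corresponds to a flag $F = (v,e)$, and the node smoothing contributes the weight $\omega_F$ coming from the $T$-action on the tangent line to $C_e$ at the preimage of $q_{i_v}$.

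For each edge $e$, the restricted map $f_e : \PP^1 \to l_e$ is, up to reparametrization, $[s:t] \mapsto [s^{d_e}:t^{d_e}]$; using the Euler sequence on $\PP^r$ restricted to $l_e$, one computes the $T$-character of $H^0(\PP^1, f_e^*T_{\PP^r})$, and after subtracting the contributions from $\Aut(\PP^1, 0, \infty)$ and the cyclic covering automorphism group $\ZZ/d_e\ZZ$ (which collectively produce the prefactor $(-1)^{d_e}(d_e!)^2(\lambda_{i_{v_1(e)}}-\lambda_{i_{v_2(e)}})^{2d_e}/d_e^{2d_e}$), one obtains the edge factor $e_\Gamma^e$. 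For each vertex $v$, three cases must be treated: $\val(v) = 1$ (ramification, where the tangent weights at $q_{i_v}$ match against the single edge weight), $\val(v) = 2$ (node between two edges, whose smoothing contributes the weight $\omega_{F_1(v)} + \omega_{F_2(v)}$ from the tensor product of branches), and $\val(v) \geq 3$ (contracted component, contributing the tangent space to $\overline{M}_{0,\val(v)}$ at $[C_v]$ together with the smoothings of the $\val(v)$ adjacent nodes and the infinitesimal automorphisms of the contracted curve). The main obstacle is to package these three valence regimes uniformly into the single closed form with exponents $1-\val(v)$ and $3-\val(v)$ on $\prod_{j\neq i_v}(\lambda_{i_v}-\lambda_j)$ and $\sum_i\omega_{F_i(v)}^{-1}$ respectively; this amounts to verifying that the boundary identities relating the $\val(v) = 1, 2, \geq 3$ computations match the analytic continuation implicit in the formula, which is the content of the calculation in \cite[Chapter 9]{CK}. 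Assembling the vertex and edge pieces yields $e^T(N_\Gamma) = e_\Gamma^v e_\Gamma^e$.
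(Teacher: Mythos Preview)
Your outline is correct and follows the standard deformation-theoretic computation found in \cite[Sections 3.3.3--3.3.4]{Kon2} and \cite[Chapter 9]{CK}. Note, however, that the paper does not actually supply a proof of this theorem: its proof environment consists solely of a citation to Kontsevich's original paper. So rather than matching the paper's argument, your proposal reproduces the content of the external reference the paper defers to; in that sense you have done more than the paper asks, and what you have written is precisely the sketch one would extract from those sources.
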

\begin{proof}
A proof of this theorem can be found in \cite[Sections 3.3.3 and 3.3.4]{Kon2}.
\end{proof}

The formulas in Theorem \ref{NormalBundle} give an effective way to compute $e^T(N_\Gamma)$. We will work out the nontrivial examples of these formulas in the next sections.

\section{Lines on hypersurfaces}\label{LinesHypersurfaces}

In this section, we reconsider the problem of counting lines on a general hypersurface of degree $d = 2r -3$ in $\PP^r$. This problem was considered in \cite[Chapter 8]{EH} using classical Schubert calculus. In this section, we use the localization of $\overline{M}_{0,0}(\PP^r,1)$ and Bott's formula for solving the problem.

Given a general hypersurface $X$ of degree $d=2r-3$ in $\PP^r$ and the class $\ell \in A_1(X)$, we compute the Gromov-Witten invariant
$$I_\ell^{(r)} = \int_{\overline{M}_{0,0}(X,\ell)}[\overline{M}_{0,0}(X,\ell)]^{\virt}.$$
By arguments similar to those in Example \ref{GromovWitten1}, we have the following formula:
$$I_\ell^{(r)} = \int_{\overline{M}_{0,0}(\PP^r,1)}c_{2r-2}(\mathcal E_r),$$
where $\mathcal E_r$ is the vector bundle on $\overline{M}_{0,0}(\PP^r,1)$ whose fiber at a stable map $f : C \rightarrow \PP^r$ is $H^0(C,f^*\mathcal O_{\PP^r}(2r-3))$.

By Example \ref{Graph1}, there are $r(r+1)$ graphs $\Gamma_{i,j}$ corresponding to the fixed point components of the natural action of $T$ on $\overline{M}_{0,0}(\PP^r,1)$. By arguments similar to those in the paper of Kontsevich (see \cite{Kon2}, Example 2.2) and using Corollary 9.1.4 in \cite{CK}, we obtain:
$$I_\ell^{(r)} = \int_{\overline{M}_{0,0}(\PP^r,1)}c_{2r-2}(\mathcal E_r) = \sum_{\Gamma_{i,j}}\frac{c_{2r-2}^T(\mathcal E_r|_{\Gamma_{i,j}})}{a_{\Gamma_{i,j}} e^T(N_{\Gamma_{i,j}})},$$
where $c_{2r-2}^T(\mathcal E_r|_{\Gamma_{i,j}})$ is defined by the following formula:
\begin{equation}\label{contribution1}
c_{2r-2}^T(\mathcal E_r|_{\Gamma_{i,j}}) = \prod_{e}\left(\prod_{a,b\in \NN,a+b=(2r-3)d_e}\frac{a\lambda_{i_{v_1(e)}}+b\lambda_{i_{v_2(e)}}}{d_e}\right)\prod_v\left((2r-3)\lambda_{i_v}\right)^{1-\val(v)}.
\end{equation}
Since each graph $\Gamma_{i,j}$ has two vertices and one edge, we have $d_e = 1, i_{v_1(e)} = i, i_{v_2(e)} = j$, and $\val(v)=1$. Thus we have
$$c_{2r-2}^T(\mathcal E_r|_{\Gamma_{i,j}}) = \prod_{a,b\in \NN,a+b=2r-3}(a\lambda_i+b\lambda_j).$$
For computing $e^T(N_{\Gamma_{i,j}})$, we use the formulas coming from Theorem \ref{NormalBundle}. More precisely, we have
$$e_{\Gamma_{i,j}}^v = \frac{1}{(\lambda_i-\lambda_j)(\lambda_j-\lambda_i)},$$
$$e_{\Gamma_{i,j}}^e = -(\lambda_i-\lambda_j)^2\prod_{k\neq i,j}(\lambda_i-\lambda_k)(\lambda_j-\lambda_k).$$
Thus
$$e^T(N_{\Gamma_{i,j}}) = e_{\Gamma_{i,j}}^ve_{\Gamma_{i,j}}^e = \prod_{k\neq i,j}(\lambda_i-\lambda_k)(\lambda_j-\lambda_k).$$
Since $a_{\Gamma_{i,j}} = 2$ for all $\Gamma_{i,j}$, we have
\begin{equation}\label{eqnLine1}
I_\ell^{(r)} = \sum_{0 \leq i,j \leq r, i \neq j}\frac{\displaystyle\prod_{a,b\in \NN,a+b=2r-3}(a\lambda_i+b\lambda_j)}{\displaystyle 2 \prod_{k\neq i,j}(\lambda_i-\lambda_k)(\lambda_j-\lambda_k)} = \sum_{0 \leq i < j \leq r}\frac{\displaystyle\prod_{a,b\in \NN,a+b=2r-3}(a\lambda_i+b\lambda_j)}{\displaystyle \prod_{k\neq i,j}(\lambda_i-\lambda_k)(\lambda_j-\lambda_k)}.
\end{equation}
By \cite[Example 2.1.15]{KV}, the moduli space $\overline{M}_{0,0}(\PP^r,1)$ is isomorphic to the Grassmannian $G(2,r+1)$ of lines in $\PP^r$. In this case, using arguments similar to those in \cite[Example 7.1.3.1]{CK}, we construct the virtual fundamental class $[\overline{M}_{0,0}(X,\ell)]^{\virt}$ as follows. Let $S$ be the tautological subbundle on the Grassmannian $G(2,r+1)$. The fiber $S_\ell$ at a line $\ell$ is the $2$-dimensional subspace of $\CC^{r+1}$ whose projectivization is $\ell$. An equation for $X$ gives a section $s$ of the vector bundle $\Sym^{2r-3}S^\vee$. Then $\overline{M}_{0,0}(X,\ell)$ is the zero locus of $s$. This contribution produces the cycle class
$$s^*[C_ZY] \in A_0(\overline{M}_{0,0}(X,\ell)),$$
where $C_ZY$ is the normal cone of $Z = \overline{M}_{0,0}(X,\ell) = Z(s) \subset Y = G(2,r+1)$. By arguments similar to those in \cite[Example 7.1.5.1]{CK}, this class is the virtual fundamental class $[\overline{M}_{0,0}(X,\ell)]^{\virt}$. Then \cite[Lemma 7.1.5]{CK} implies that
$$i_*([\overline{M}_{0,0}(X,\ell)]^{\virt}) = c_{2r-2}(\Sym^{2r-3}S^\vee) \in A^*(G(2,r+1)),$$
where $i : \overline{M}_{0,0}(X,\ell) \hookrightarrow G(2,r+1)$ is an embedding. This will lead to the following formula:
\begin{equation}\label{eqnLine2}
I_\ell^{(r)} = \int_{G(2,r+1)}c_{2r-2}(\Sym^{2r-3}S^\vee).
\end{equation}
In summary, we have the following theorem.
\begin{theorem}\label{lineBott}
Let $X\subset \PP^r$ be a general hypersurface of degree $2r-3$. The number of lines on $X$ is given by
$$\sum_{0 \leq i < j \leq r}\frac{\displaystyle\prod_{a,b\in \NN,a+b=2r-3}(a\lambda_i+b\lambda_j)}{\displaystyle \prod_{k\neq i,j}(\lambda_i-\lambda_k)(\lambda_j-\lambda_k)}.$$
\end{theorem}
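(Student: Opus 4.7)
The plan is to realize the number of lines on $X$ as the Gromov-Witten invariant $I_\ell^{(r)}$ and then to evaluate this invariant by Atiyah-Bott localization on $\overline{M}_{0,0}(\PP^r,1)$ with respect to the standard torus action described in Section \ref{Localization}. The setup is already in place: since $X \subset \PP^r$ is a smooth hypersurface of degree $2r-3$, the obstruction bundle $\mathcal E_r$ on $\overline{M}_{0,0}(\PP^r,1)$ (whose fiber over $f : C \to \PP^r$ is $H^0(C, f^*\mathcal O(2r-3))$) has rank $2r-2$, which matches the expected dimension of $\overline{M}_{0,0}(\PP^r,1)$ by Theorem \ref{convex}. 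Pushing the virtual class forward along the inclusion $\overline{M}_{0,0}(X,\ell) \hookrightarrow \overline{M}_{0,0}(\PP^r,1)$ (the analogue of \cite[Lemma 7.1.5]{CK} used in Example \ref{GromovWitten1}) gives the identity $I_\ell^{(r)} = \int_{\overline{M}_{0,0}(\PP^r,1)} c_{2r-2}(\mathcal E_r)$, so the task reduces to computing that top Chern integral.

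Next, I would invoke Bott's residue formula in the form \cite[Corollary 9.1.4]{CK}, which expresses the integral as a sum over the fixed point components $\overline{M}_{\Gamma_{i,j}}$ of the form
\[
I_\ell^{(r)} = \sum_{\Gamma_{i,j}} \frac{c_{2r-2}^T(\mathcal E_r|_{\Gamma_{i,j}})}{a_{\Gamma_{i,j}}\, e^T(N_{\Gamma_{i,j}})}.
\]
By Example \ref{Graph1}, the only fixed graphs for $d=1$ are the one-edge graphs $\Gamma_{i,j}$ with $0 \le i \ne j \le r$, each with $a_{\Gamma_{i,j}} = 2$. For the numerator, I would specialize formula \eqref{contribution1}: since each $\Gamma_{i,j}$ has a single edge of degree $1$ and two univalent vertices, the vertex factors $((2r-3)\lambda_{i_v})^{1-\val(v)}$ are trivial and the edge factor yields $\prod_{a+b=2r-3}(a\lambda_i+b\lambda_j)$.

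For the denominator, I would apply Theorem \ref{NormalBundle} directly to $\Gamma_{i,j}$. Each vertex has valence one, so $e_{\Gamma_{i,j}}^v$ simplifies to $((\lambda_i-\lambda_j)(\lambda_j-\lambda_i))^{-1}$; the edge factor $e_{\Gamma_{i,j}}^e$ is $-(\lambda_i-\lambda_j)^2 \prod_{k \ne i,j}(\lambda_i-\lambda_k)(\lambda_j-\lambda_k)$ after the degree-$1$ specialization. Multiplying these gives $e^T(N_{\Gamma_{i,j}}) = \prod_{k\ne i,j}(\lambda_i-\lambda_k)(\lambda_j-\lambda_k)$. Summing over ordered pairs $(i,j)$ and combining the symmetry $\Gamma_{i,j} \sim \Gamma_{j,i}$ with the factor $a_{\Gamma_{i,j}} = 2$ collapses the sum to one over unordered pairs $0 \le i < j \le r$, producing exactly the formula claimed in Theorem \ref{lineBott}.

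The main obstacle is not really an obstacle at this level: all substantive work (localization, the normal-bundle formula, and the identification of the virtual class with a Chern class) has been quoted from \cite{CK} and \cite{Kon2}. The only mild subtlety is the careful bookkeeping that specializes the general vertex/edge formulas of Theorem \ref{NormalBundle} to the degenerate case $\val(v)=1$, $d_e=1$ without introducing spurious zeros or poles; once this is verified, the final expression is independent of the choice of equivariant parameters $\lambda_0,\ldots,\lambda_r$, as it must be since the left-hand side is a rational number. Independence can be checked either by noting that the sum is manifestly a symmetric rational function of fixed total degree zero whose only possible poles cancel, or by numerically evaluating at convenient values $\lambda_i = i$ and comparing with the classical Schubert-calculus count $\int_{G(2,r+1)} c_{2r-2}(\Sym^{2r-3}S^\vee)$ from \eqref{eqnLine2}.
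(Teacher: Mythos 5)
Your proposal is correct and follows essentially the same route as the paper: Bott localization over the one-edge graphs $\Gamma_{i,j}$ with the stated vertex and edge contributions yields \eqref{eqnLine1}, and the identification with the classical count $\int_{G(2,r+1)}c_{2r-2}(\Sym^{2r-3}S^\vee)$ via \eqref{eqnLine2} and \cite[Chapter 8]{EH} supplies the enumerative interpretation. The only nuance is that the paper leans on the Grassmannian/Fano-scheme computation of \cite{EH} to guarantee that the invariant really counts lines on a general $X$, whereas you treat that comparison as a consistency check rather than the source of enumerativity.
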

\begin{proof}
By \cite[Chapter 8]{EH}, the number of lines on $X$ is finite and can be computed by the following formula:
$$\textrm{number of lines on } X = \int_{G(2,r+1)}c_{2r-2}(\Sym^{2r-3}S^\vee).$$
The induced action on $\overline{M}_{0,0}(\PP^r,1)$ gives us formula \eqref{eqnLine1}. By \eqref{eqnLine2}, we get the desired formula.
\end{proof}

Note that Theorem \ref{lineBott} is a special case of Corollary 1.2 in \cite{H}. Furthermore, we can replace the variables $\lambda_i$ in the formula by certain numbers since they cancel in the final result, which is the number of lines on $X$.

\section{Rational curves on quintic threefolds}\label{quintic}

Let $X \subset \PP^4$ be a general quintic threefold and $d$ be a positive integer. An important question in enumerative geometry is how many rational curves of degree $d$ there are on $X$. It has already been mentioned that the string theorists can compute the numbers $n_d$ of rational curves of degree $d$ on a general quintic threefold via topological quantum field theory. By the algebro-geometric methods, the mathematicians checked these results up to degree $4$. We here present how to use the localization of $\overline{M}_{0,0}(\PP^4,d)$ and Bott's formula for verifying these results up to degree $6$.

First of all, we need to compute the Gromov-Witten invariant of class $\beta = d\ell \in A_1(X)$, that is
$$N_d = \int_{[\overline{M}_{0,0}(X,d\ell)]^{\virt}} 1.$$
By Example \ref{GromovWitten1}, we have
\begin{equation}\label{N_d}
N_d = \int_{\overline{M}_{0,0}(\PP^4,d)}c_{5d+1}(\mathcal V_d).
\end{equation}
By \cite[Corollary 9.1.4]{CK} and the arguments as in \cite[Example 2.2]{Kon2}, we have
$$N_d = \int_{\overline{M}_{0,0}(\PP^4,d)}c_{5d+1}(\mathcal V_d) = \sum_{\Gamma}\frac{c_{5d+1}^T(\mathcal V_d|_{\Gamma})}{a_{\Gamma} e^T(N_{\Gamma})},$$
where the sum runs over all graphs corresponding the fixed point components of a torus action on $\overline{M}_{0,0}(\PP^4,d)$. Given a graph $\Gamma$, the $T$-equivariant Chern class $c_{5d+1}^T(\mathcal V_d|_{\Gamma})$ is computed by the following formula:
\begin{equation}\label{contribution3}
c_{5d+1}^T(\mathcal V_d|_{\Gamma}) = \prod_{e}\left(\prod_{a,b\in \NN,a+b=5d_e}\frac{a\lambda_{i_{v_1(e)}}+b\lambda_{i_{v_2(e)}}}{d_e}\right)\prod_v\left(5\lambda_{i_v}\right)^{1-\val(v)},
\end{equation}
where the products run over all the edges and vertices of $\Gamma$. The number $a_\Gamma$ is the order of the group $A_\Gamma$ as in \eqref{group}. We sketch an algorithm (see Algorithm \ref{GWNd}) for computing the Gromov-Witten invariants $N_d$.

\begin{algorithm}[hb]
\caption{\texttt{rationalCurve(int d)}}
\label{GWNd}
\begin{algorithmic}[1]
\REQUIRE A positive integer $d$.
\ENSURE The Gromov-Witten invariants $N_d$ defined by \eqref{N_d}.
\STATE The natural action of $T = (\CC^*)^5$ on $\PP^4$ induces a $T$-action on the moduli space $\overline{M}_{0,0}(\PP^4,d)$. Enumerate all the possible graphs $\Gamma$ corresponding to the fixed point components of this action.
\STATE Set $a = 0$.
\FOR {each graph $\Gamma$}
	\STATE Use formula \eqref{contribution3} to compute the $T$-equivariant Chern class $c_{5d+1}^T(\mathcal V_d|_{\Gamma})$ of the contribution bundle $\mathcal V_d$ at $\Gamma$.
	\STATE Use the formulas in Theorem \ref{NormalBundle} to compute the $T$-equivariant Euler class $e^T(N_{\Gamma})$ of the normal bundle $N_\Gamma$.
    \STATE Use the exact sequence \eqref{group} to compute the order $a_\Gamma$ of the group $A_\Gamma$.
	\STATE Set $a = a + \frac{c_{5d+1}^T(\mathcal V_d|_{\Gamma})}{a_\Gamma e^T(N_{\Gamma})}$.
\ENDFOR
\RETURN $a$.
\end{algorithmic}
\end{algorithm}
\FloatBarrier

In order to shorten the computation time, we replace $\lambda_i$ by $10^i$. This is possible since the final results do not depend on the $\lambda_i$. Here is the implementation of Algorithm \ref{GWNd} in \texttt{schubert.lib}:
\begin{verbatim}
    > proc rationalCurve(int d)
    {
        variety P = projectiveSpace(4);
        stack M = moduliSpace(P,d);
        def F = fixedPoints(M);
        number a = 0;
        for (int i=1;i<=size(F);i++)
        {
            graph G = F[i][1];
            number s = contributionBundle(M,G);
            number t = F[i][2]*normalBundle(M,G);
            a = a + s/t;
            kill s,t,G;
        }
        return (a);
    }
\end{verbatim}
Then the numbers $N_1, N_2, N_3, N_4, N_5, N_6$ are computed as follows:
\begin{verbatim}
    > ring r = 0,x,dp;
    > for (int d=1;d<=6;d++) {rationalCurve(d);}
    2875
    4876875/8
    8564575000/27
    15517926796875/64
    229305888887648
    248249742157695375
\end{verbatim}

In \texttt{schubert.lib}, a moduli space of stable maps is represented by a projective space and a positive integer which is the degree of stable maps. The command \texttt{moduliSpace} makes a class which represents such a moduli space. The function \texttt{contributionBundle} is implemented using the formula \eqref{contribution3}. The function \texttt{normalBundle} is implemented using the formulas in Theorem \ref{NormalBundle}.

In order to better understand the relation between $N_d$ and $n_d$, we define {\it instanton numbers} $\tilde{n}_d$ for the quintic hypersurface in $\PP^4$ such that the equation
\begin{equation}\label{instantonNumbers}
N_d = \sum_{k | d} \frac{\tilde{n}_{d/k}}{k^3}
\end{equation}
is automatically true. By \cite[Example 7.4.4.1]{CK}, if $1 \leq d \leq 9$, then $\tilde{n}_d = n_d$. Unfortunately, this is not true in general. For example, if the Clemens conjecture \cite[Conjecture 7.4.3]{CK} is true for all $d\leq 10$, then
$$\tilde{n}_{10} = 6 \times 17601000 + n_{10}.$$
See \cite[Section 7.4.4]{CK} for further discussions on this topic. In particular, we have
$$N_1 = n_1, N_2 = \frac{n_1}{8} + n_2, N_3 = \frac{n_1}{27} + n_3,$$
$$N_4 = \frac{n_1}{64} + \frac{n_2}{8} + n_4, N_5 = \frac{n_1}{125} + n_5, N_6 = \frac{n_1}{216} + \frac{n_2}{27} + \frac{n_3}{8} + n_6.$$
From the computations above, we obtain
\begin{align*}
n_1 & = N_1 = {\bf 2875},\\
n_2 & = N_2 - \frac{n_1}{8} = \frac{4876875}{8} - \frac{2875}{8} = {\bf 609250},\\
n_3 & = N_3 - \frac{n_1}{27} = \frac{8564575000}{27} - \frac{2875}{27} = {\bf 317206375},\\
n_4 & = N_4 - \frac{n_2}{8} - \frac{n_1}{64} = \frac{15517926796875}{64} - \frac{609250}{8} - \frac{2875}{64} = {\bf 242467530000},\\
n_5 & = N_5 - \frac{n_1}{125} = 229305888887648 - \frac{2875}{125} = {\bf 229305888887625},\\
n_6 & = N_6 - \frac{n_3}{8} - \frac{n_2}{27} - \frac{n_1}{216}\\
& = 248249742157695375 - \frac{317206375}{8} - \frac{609250}{27} - \frac{2875}{216}\\
& = {\bf 248249742118022000}.
\end{align*}
We collect these numbers in Table \ref{n_d}.
\begin{table}[hb]
\begin{center}
\begin{tabular}
{|c|c|}
\hline
$d$ & $n_d$ \\
\hline
$1$ & $2875$ \\
$2$ & $609250$ \\
$3$ & $317206375$ \\
$4$ & $242467530000$ \\
$5$ & $229305888887625$ \\
$6$ & $248249742118022000$ \\
\hline
\end{tabular}
\caption{Some numbers $n_d$.}
\label{n_d}
\end{center}
\end{table}
\FloatBarrier
The numbers with $5 \leq d \leq 6$ have not been obtained before by the localization theorem and Bott's formula. They are all in agreement with predictions made from mirror symmetry in \cite[Table 4]{CdGP} up to degree $6$. For a different method for computing these numbers, we refer to the work of Gathmann \cite[Example 2.5.8]{G3}.

\section{Rational curves on complete intersections}

Using the method applied to the quintic hypersurfaces in $\PP^4$, we can also deal with any complete intersection of hypersurfaces. In Section \ref{mainresult}, we have already shown that there are five complete intersection Calabi-Yau threefolds of type $(d_1,\ldots,d_k)$ in $\PP^{k+3}$. In this section, we present how to compute the number $n_d^{(d_1,\ldots,d_k)}$ of rational curves of degree $d$ on a general complete intersection Calabi-Yau threefold of type $(d_1,\ldots,d_k)$ in $\PP^{k+3}$. Like in the previous section, we also sketch an algorithm for computing the Gromov-Witten invariants $N_d^{(d_1,\ldots,d_k)}$ corresponding to the numbers $n_d^{(d_1,\ldots,d_k)}$.

Let $X$ be a general complete intersection Calabi-Yau threefold of type $(d_1,\ldots,d_k)$ in $\PP^{k+3}$ and let $d$ be a positive integer. We need to compute the Gromov-Witten invariant of the class $d\ell \in A_1(X)$, that is
\begin{equation}\label{GWCI}
N_d^{(d_1,\ldots,d_k)} = \int_{\overline{M}_{0,0}(X,d\ell)} [\overline{M}_{0,0}(X,d\ell)]^{\virt}.
\end{equation}
By arguments similar to those in \cite[Example 7.1.5.1]{CK}, we construct the virtual fundamental class $[\overline{M}_{0,0}(X,d\ell)]^{\virt}$ as follows. Let $\mathcal V_d^{(d_1,\ldots,d_k)}$ be the vector bundle on the moduli space $\overline{M}_{0,0}(\PP^{k+3},d)$ whose fiber over a stable map $f : C \rightarrow \PP^{k+3}$ is
$$H^0(C,f^*(\mathcal O_{\PP^{k+3}}(d_1)\oplus\cdots \oplus\mathcal O_{\PP^{k+3}}(d_k))).$$
In this case, $\overline{M}_{0,0}(X,d\ell)$ is the zero locus of a general section $s$ of $\mathcal V_d^{(d_1,\ldots,d_k)}$. Moreover, we have
$$[\overline{M}_{0,0}(X,d\ell)]^{\virt} = s^*[C_ZY],$$
where $Z = \overline{M}_{0,0}(X,d\ell), Y = \overline{M}_{0,0}(\PP^{k+3},d)$, and $C_ZY$ is the normal cone to $Z$ in $Y$.

By \cite[Lemma 7.1.5]{CK}, we obtain
$$i_*([\overline{M}_{0,0}(X,d\ell)]^{\virt}) = c_{(k+4)d+k}(\mathcal V_d^{(d_1,\ldots,d_k)}) \in A^*(\overline{M}_{0,0}(\PP^{k+3},d)),$$
where $i : \overline{M}_{0,0}(X,d\ell) \hookrightarrow \overline{M}_{0,0}(\PP^{k+3},d)$ is an embedding. This implies the following formula:
$$N_d^{(d_1,\ldots,d_k)} = \int_{\overline{M}_{0,0}(\PP^{k+3},d)}c_{(k+4)d+k}(\mathcal V_d^{(d_1,\ldots,d_k)}).$$
In other words, we have
$$\mathcal V_d^{(d_1,\ldots,d_k)} = \bigoplus_{i=1}^k \mathcal V_d^{(d_i)},$$
where $\mathcal V_d^{(d_i)}$ is the vector bundle on the moduli space $\overline{M}_{0,0}(\PP^{k+3},d)$ whose fiber over a stable map $f : C \rightarrow \PP^{k+3}$ is $H^0(C,f^*\mathcal O_{\PP^{k+3}}(d_i))$. Note that $\mathcal V_d^{(5)}$ is $\mathcal V_d$ in the quintic case. Therefore, we have the following formula:
$$N_d^{(d_1,\ldots,d_k)} = \int_{\overline{M}_{0,0}(\PP^{k+3},d)}\prod_{i=1}^k c_{d_id+1}(\mathcal V_d^{(d_i)}).$$
Using Corollary 9.1.4 in \cite{CK}, we have
$$N_d^{(d_1,\ldots,d_k)} = \sum_{\Gamma}\frac{\prod_{i=1}^kc_{d_id+1}^T(\mathcal V_d^{(d_i)}|_{\Gamma})}{a_{\Gamma} e^T(N_{\Gamma})},$$
where the sum runs over all graphs corresponding to the fixed point components of a torus action on $\overline{M}_{0,0}(\PP^{k+3},d)$. For each graph $\Gamma$ and each $i$, the $T$-equivariant Chern class $c_{d_id+1}^T(\mathcal V_d^{d_i}|_{\Gamma})$ is computed by the following formula:
\begin{equation}\label{contribution4}
c_{d_id+1}^T(\mathcal V_d^{(d_i)}|_{\Gamma}) = \prod_{e}\left(\prod_{a,b\in \NN,a+b=d_id_e}\frac{a\lambda_{i_{v_1(e)}}+b\lambda_{i_{v_2(e)}}}{d_e}\right)\prod_v\left(d_i\lambda_{i_v}\right)^{1-\val(v)},
\end{equation}
where the products run over all the edges and vertices of $\Gamma$. We sketch an algorithm (see Algorithm \ref{GWNdCI}) for computing the Gromov-Witten invariants $N_d^{(d_1,\ldots,d_k)}$.

\begin{algorithm}
\caption{\texttt{rationalCurve(int d, list l)}}
\label{GWNdCI}
\begin{algorithmic}[1]
\REQUIRE A positive integer $d$ and a list of the integers $d_1,\ldots,d_k$ with $d_i\geq 2$ for all $i$.
\ENSURE The Gromov-Witten invariants $N_d^{(d_1,\ldots,d_k)}$ defined by \eqref{GWCI}.
\STATE The natural action of $T = (\CC^*)^{k+4}$ on $\PP^{k+3}$ induces a $T$-action on the moduli space $\overline{M}_{0,0}(\PP^{k+3},d)$. Enumerate all the possible graphs $\Gamma$ corresponding to the fixed point components of this action.
\STATE Set $a = 0$.
\FOR {each graph $\Gamma$}
	\FOR {$i$ from $1$ to $k$}
	\STATE Use formula \eqref{contribution4} to compute the equivariant Chern class $c_{d_id+1}^T(\mathcal V_d^{(d_i)}|_{\Gamma})$ of the contribution bundle $\mathcal V_d^{(d_i)}$ at $\Gamma$.
	\ENDFOR
	\STATE Set $s = \prod_{i=1}^k c_{d_id+1}^T(\mathcal V_d^{(d_i)}|_{\Gamma}).$
	\STATE Use the formulas in Theorem \ref{NormalBundle} to compute the equivariant Euler class $e^T(N_{\Gamma})$ of the normal bundle $N_\Gamma$.
    \STATE Use the exact sequence \eqref{group} to compute the order $a_\Gamma$ of the group $A_\Gamma$.
	\STATE Set $a = a + \frac{s}{a_\Gamma e^T(N_{\Gamma})}$.
\ENDFOR
\RETURN $a$.
\end{algorithmic}
\end{algorithm}
\FloatBarrier

Here is the implementation of Algorithm \ref{GWNdCI} in \texttt{schubert.lib}:

\begin{verbatim}
    > proc rationalCurve(int d, list l)
    {
        int i,j;
        variety P = projectiveSpace(size(l)+3);
        stack M = moduliSpace(P,d);
        def F = fixedPoints(M);
        number a = 0;
        for (i=1;i<=size(F);i++)
        {
            graph G = F[i][1];
            number s = 1;
            for (j=1;j<=size(l);j++)
            {
                s = s*contributionBundle(M,G,l[j]);
            }
            number t = F[i][2]*normalBundle(M,G);
            a = a + s/t;
            kill s,t,G;
        }
        return (a);
    }
\end{verbatim}

Note that the function \texttt{contributionBundle} in the implementation of Algorithm \ref{GWNdCI} is based on the formula \eqref{contribution4}.

As for the quintic hypersurface in $\PP^4$, we may define {\it instanton numbers} $\tilde{n}_d^{(d_1,\ldots,d_k)}$ for complete intersection Calabi-Yau threefolds of type $(d_1,\ldots,d_k)$ in $\PP^{k+3}$ such that the equation
\begin{equation}\label{instantonNumbersCI}
N_d^{(d_1,\ldots,d_k)} = \sum_{k | d} \frac{\tilde{n}_{d/k}^{(d_1,\ldots,d_k)}}{k^3}
\end{equation}
is automatically true. In the following examples, the instanton numbers agree with the numbers of rational curves. We therefore use the notation $n_d^{(d_1,\ldots,d_k)}$ also for the instanton numbers.

We illustrate the computation of the numbers $N_1^{(4,2)},N_1^{(3,3)},N_1^{(3,2,2)},N_1^{(2,2,2,2)}$ as follows:

\begin{verbatim}
    > ring r = 0,x,dp;
    > list l = list(4,2),list(3,3),list(3,2,2),list(2,2,2,2);
    > for (int i=1;i<=4;i++) {rationalCurve(1,l[i]);}
    1280
    1053
    720
    512
\end{verbatim}

These results are the numbers $n_1^{(4,2)}, n_1^{(3,3)}, n_1^{(3,2,2)}, n_1^{(2,2,2,2)}$ of lines on the general complete intersection Calabi-Yau threefolds. They agree with \cite[Table 1.3]{K2}.

The numbers $N_2^{(4,2)},N_2^{(3,3)},N_2^{(3,2,2)},N_2^{(2,2,2,2)}$ are computed as follows:

\begin{verbatim}
    > for (int i=1;i<=4;i++) {rationalCurve(2,l[i]);}
    92448
    423549/8
    22518
    9792
\end{verbatim}

The numbers $n_2^{(4,2)}, n_2^{(3,3)}, n_2^{(3,2,2)}, n_2^{(2,2,2,2)}$ of conics on the general complete intersection Calabi-Yau threefolds of types $(4,2),(3,3),(3,2,2),(2,2,2,2)$, respectively, are computed as follows:
$$n_2^{(4,2)} = N_2^{(4,2)} - \frac{n_1^{(4,2)}}{8} = 92448 - \frac{1280}{8} = {\bf 92288},$$
$$n_2^{(3,3)} = N_2^{(3,3)} - \frac{n_1^{(3,3)}}{8} = \frac{423549}{8} - \frac{1053}{8} = {\bf 52812},$$
$$n_2^{(3,2,2)} = N_2^{(3,2,2)} - \frac{n_1^{(3,2,2)}}{8} = 22518 - \frac{720}{8} = {\bf 22428},$$
$$n_2^{(2,2,2,2)} = N_2^{(2,2,2,2)} - \frac{n_1^{(2,2,2,2)}}{8} = 9792 - \frac{512}{8} = {\bf 9728}.$$

The numbers $N_3^{(4,2)},N_3^{(3,3)},N_3^{(3,2,2)},N_3^{(2,2,2,2)}$ are computed as follows:

\begin{verbatim}
    > for (int i=1;i<=4;i++) {rationalCurve(3,l[i]);}
    422690816/27
    6424365
    4834592/3
    11239424/27
\end{verbatim}

The numbers $n_3^{(4,2)}, n_3^{(3,3)}, n_3^{(3,2,2)}, n_3^{(2,2,2,2)}$ of conics on the general complete intersection Calabi-Yau threefolds of types $(4,2),(3,3),(3,2,2),(2,2,2,2)$, respectively, are computed as follows:
$$n_3^{(4,2)} = N_3^{(4,2)} - \frac{n_1^{(4,2)}}{27} = \frac{422690816}{27} - \frac{1280}{27} = {\bf 15655168},$$
$$n_3^{(3,3)} = N_3^{(3,3)} - \frac{n_1^{(3,3)}}{27} = 6424365 - \frac{1053}{27} = {\bf 6424326},$$
$$n_3^{(3,2,2)} = N_3^{(3,2,2)} - \frac{n_1^{(3,2,2)}}{27} = \frac{4834592}{3} - \frac{720}{27} = {\bf 1611504},$$
$$n_3^{(2,2,2,2)} = N_3^{(2,2,2,2)} - \frac{n_1^{(2,2,2,2)}}{27} = \frac{11239424}{27} - \frac{512}{27} = {\bf 416256}.$$
These numbers agree with \cite[Theorem 1.1]{ES2}.

The numbers $N_4^{(4,2)},N_4^{(3,3)},N_4^{(3,2,2)},N_4^{(2,2,2,2)}$ are computed as follows:

\begin{verbatim}
    > for (int i=1;i<=4;i++) {rationalCurve(4,l[i]);}
    3883914084
    72925120125/64
    672808059/4
    25705160
\end{verbatim}

The numbers $n_4^{(4,2)}, n_4^{(3,3)}, n_4^{(3,2,2)}, n_4^{(2,2,2,2)}$ of conics on the general complete intersection Calabi-Yau threefolds of types $(4,2),(3,3),(3,2,2),(2,2,2,2)$, respectively, are computed as follows:
$$n_4^{(4,2)} = N_4^{(4,2)} - \frac{n_2^{(4,2)}}{8} - \frac{n_1^{(4,2)}}{64} = 3883914084 - \frac{92288}{8} - \frac{1280}{64}= {\bf 3883902528},$$
$$n_4^{(3,3)} = N_4^{(3,3)} - \frac{n_2^{(3,3)}}{8} - \frac{n_1^{(3,3)}}{64} = \frac{72925120125}{64} - \frac{52812}{8} - \frac{1053}{64}= {\bf 1139448384},$$
$$n_4^{(3,2,2)} = N_4^{(3,2,2)} - \frac{n_2^{(3,2,2)}}{8} - \frac{n_1^{(3,2,2)}}{64} = \frac{672808059}{4} - \frac{22428}{8} - \frac{720}{64}= {\bf 168199200},$$
$$n_4^{(2,2,2,2)} = N_4^{(2,2,2,2)} - \frac{n_2^{(2,2,2,2)}}{8} - \frac{n_1^{(2,2,2,2)}}{64} = 25705160 - \frac{9728}{8} - \frac{512}{64} = {\bf 25703936}.$$

The numbers $N_5^{(4,2)},N_5^{(3,3)},N_5^{(3,2,2)},N_5^{(2,2,2,2)}$ are computed as follows:

\begin{verbatim}
    > for (int i=1;i<=4;i++) {rationalCurve(5,l[i]);}
    29773082054656/25
    31223486573928/125
    541923292944/25
    244747968512/125
\end{verbatim}

The numbers $n_5^{(4,2)}, n_5^{(3,3)}, n_5^{(3,2,2)}, n_5^{(2,2,2,2)}$ of conics on the general complete intersection Calabi-Yau threefolds of types $(4,2),(3,3),(3,2,2),(2,2,2,2)$, respectively, are computed as follows:
$$n_5^{(4,2)} = N_5^{(4,2)} - \frac{n_1^{(4,2)}}{125} = \frac{29773082054656}{25} - \frac{1280}{125} = {\bf 1190923282176},$$
$$n_5^{(3,3)} = N_5^{(3,3)} - \frac{n_1^{(3,3)}}{125} = \frac{31223486573928}{125} - \frac{1053}{125} = {\bf 249787892583},$$
$$n_5^{(3,2,2)} = N_5^{(3,2,2)} - \frac{n_1^{(3,2,2)}}{125} = \frac{541923292944}{25} - \frac{720}{125} = {\bf 21676931712},$$
$$n_5^{(2,2,2,2)} = N_5^{(2,2,2,2)} - \frac{n_1^{(2,2,2,2)}}{125} = \frac{244747968512}{125}  - \frac{512}{125} = {\bf 1957983744}.$$

The numbers $N_6^{(4,2)},N_6^{(3,3)},N_6^{(3,2,2)},N_6^{(2,2,2,2)}$ are computed as follows:

\begin{verbatim}
    > for (int i=1;i<=4;i++) {rationalCurve(6,l[i]);}
    417874607302656
    501287722516269/8
    3195558106836
    511607926784/3
\end{verbatim}

The numbers $n_6^{(4,2)}, n_6^{(3,3)}, n_6^{(3,2,2)}, n_6^{(2,2,2,2)}$ of conics on the general complete intersection Calabi-Yau threefolds of type $(4,2),(3,3),(3,2,2),(2,2,2,2)$, respectively, are computed as follows:

\begin{align*}
n_6^{(4,2)} & = N_6^{(4,2)} - \frac{n_3^{(4,2)}}{8} - \frac{n_2^{(4,2)}}{27} - \frac{n_1^{(4,2)}}{216}\\
& = 417874607302656 - \frac{15655168}{8} - \frac{92288}{27} - \frac{1280}{216}\\
&= {\bf 417874605342336},
\end{align*}
\begin{align*}
n_6^{(3,3)} & = N_6^{(3,3)} - \frac{n_3^{(3,3)}}{8} - \frac{n_2^{(3,3)}}{27} - \frac{n_1^{(3,3)}}{216}\\
& = \frac{501287722516269}{8} - \frac{6424326}{8} - \frac{52812}{27} - \frac{1053}{216}\\
&= {\bf 62660964509532},
\end{align*}
\begin{align*}
n_6^{(3,2,2)} & = N_6^{(3,2,2)} - \frac{n_3^{(3,2,2)}}{8} - \frac{n_2^{(3,2,2)}}{27} - \frac{n_1^{(3,2,2)}}{216}\\
& = 3195558106836 - \frac{1611504}{8} - \frac{22428}{27} - \frac{720}{216}\\
&= {\bf 3195557904564},
\end{align*}
\begin{align*}
n_6^{(2,2,2,2)} & = N_6^{(2,2,2,2)} - \frac{n_3^{(2,2,2,2)}}{8} - \frac{n_2^{(2,2,2,2)}}{27} - \frac{n_1^{(2,2,2,2)}}{216}\\
& = \frac{511607926784}{3} - \frac{416256}{8} - \frac{9728}{27}  - \frac{512}{216}\\
&= {\bf 170535923200}.
\end{align*}

Summarizing the computations above, we obtain Table \ref{ndd}.

\begin{table}
\begin{center}
\begin{tabular}
{|c|c|c|c|c|}
\hline
$d$ & $(4,2)$ & $(3,3)$ & $(3,2,2)$ & $(2,2,2,2)$ \\
\hline
$1$ & $1280$ & $1053$ & $720$ & $512$ \\
$2$ & $92288$ & $52812$ & $22428$ & $9728$ \\
$3$ & $15655168$ & $6424326$ & $1611504$ & $416256$ \\
$4$ & $3883902528$ & $1139448384$ & $168199200$ & $25703936$ \\
$5$ & $1190923282176$ & $249787892583$ & $21676931712$ & $1957983744$ \\
$6$ & $417874605342336$ & $62660964509532$ & $3195557904564$ & $170535923200$ \\
\hline
\end{tabular}
\caption{Some numbers $n_d^{(d_1,\ldots,d_k)}$.}
\label{ndd}
\end{center}
\end{table}
\FloatBarrier

The numbers with $4 \leq d \leq 6$ have not been obtained before by algebro-geometric methods. They are all in agreement with predictions made from mirror symmetry in \cite{LT} up to degree $6$.

\section*{Acknowledgements}

This work is part of my Ph.D. thesis at the University of Kaiserslautern. I would like to take this opportunity to express my profound gratitude to my advisor Professor Wolfram Decker. I would also like to thank Professor Andreas Gathmann and Dr. Janko B\"{o}hm for their valuable suggestions.

\end{document}